\numberwithin{equation}{section}
\newtheorem{Theorem}{Theorem}[section]
\newtheorem{Corollary}[Theorem]{Corollary}
\newtheorem{Lemma}[Theorem]{Lemma}
\newtheorem{Proposition}[Theorem]{Proposition}
\newtheorem{Conjecture}[Theorem]{Conjecture}
\theoremstyle{definition}
\newtheorem{Definition}[Theorem]{Definition}
\newtheorem{Example}[Theorem]{Example}
\newtheorem{Remark}[Theorem]{Remark} }
\begin{document}

\allowdisplaybreaks

\renewcommand{\thefootnote}{}

\newcommand{\arXivNumber}{2402.17174}

\renewcommand{\PaperNumber}{063}

\FirstPageHeading

\ShortArticleName{On Some Special Cases of Gaiotto's Positivity Conjecture}

\ArticleName{On Some Special Cases\\ of Gaiotto's Positivity Conjecture\footnote{This paper is a~contribution to the Special Issue on Basic Hypergeometric Series Associated with Root Systems and Applications in honor of Stephen C.~Milne's 75th birthday. The~full collection is available at \href{https://www.emis.de/journals/SIGMA/Milne.html}{https://www.emis.de/journals/SIGMA/Milne.html}}}

\Author{Pavel ETINGOF}

\AuthorNameForHeading{P.~Etingof}

\Address{Department of Mathematics, MIT, Cambridge, MA 02139, USA}
\Email{\href{mailto:etingof@math.mit.edu}{etingof@math.mit.edu}}
\URLaddress{\url{http://www-math.mit.edu/~etingof}}

\ArticleDates{Received February 29, 2024, in final form July 11, 2024; Published online July 13, 2024}

\Abstract{We prove a conjecture of D.~Gaiotto on positivity of inner products arising in studying Landau--Ginzburg boundary conditions in the 1-dimensional case, and in special cases in higher dimensions, for 3d free hypermultiplets.}

\Keywords{gauge theory; total positivity; positive definite function; Bochner's theorem}

\Classification{81T13; 42A82; 15B48}

\renewcommand{\thefootnote}{\arabic{footnote}}
\setcounter{footnote}{0}

\section{Introduction}

Suppose that $V=\bigcup_{N\ge 0}F_NV$ is a complex vector space with an ascending filtration, such that ${\dim F_NV<\infty}$ for all $N$. The associated graded space to $V$ is
\[
\operatorname{gr}V:=\bigoplus_{N\ge 0}\operatorname{gr}_NV,
\]
where $\operatorname{gr}_NV:=F_NV/F_{N-1}V$. If $V$, $V'$ are two such filtered vector spaces, then a linear map~${\phi\colon V'\to V}$
is filtered if $\phi(F_NV')\subset F_NV$. In this case, $\phi$ gives rise to the associated graded map
$\operatorname{gr}\phi\colon \operatorname{gr}V'\to \operatorname{gr}V$. Note also that $\operatorname{gr}V$ may be viewed as a filtered space
with~${F_N\operatorname{gr}V:=\oplus_{j=0}^N \operatorname{gr}_jV}$.

Let $(\,,\,)$ be a bilinear or sesquilinear form on a filtered space $V$. Let us say that $(\,,\,)$ is \emph{filtered-nondegenerate} if its restriction $(\,,\,)_N$ to the subspace $F_NV$ is nondegenerate for every $N\ge 0$. In this case we have
unique filtered isomorphisms $\phi_+,\phi_-\colon \operatorname{gr}V\to V$
with $\operatorname{gr}\phi_+=\operatorname{gr}\phi_-={\rm Id}$
such that for all $N$, $\phi_\pm(\operatorname{gr}_{N}V)=(F_{N-1}V)^\perp_\pm$,
where for $Y\subset F_NV$, $Y^\perp_+$, $Y^\perp_-$
denote the left, respectively right, orthogonal complement of~$Y$
under $(\,,\,)$ (the \emph{Gram--Schmidt biorthogonalization maps}). Thus we get a bilinear or sesquilinear form
$\langle\,,\,\rangle$ on $\operatorname{gr}V$ given by
$
\langle v_1,v_2\rangle:=(\phi_+v_1,\phi_-v_2)$.
Moreover, this form is nondegenerate when restricted
to each homogeneous component $\operatorname{gr}_NV$ of $\operatorname{gr}V$, and different
homogeneous components are orthogonal.

We will consider this construction when $V=\mathbb C[z_1,\dots,z_n]$
and the filtration is given by degree of polynomials, so that
$F_NV=\mathbb C[z_1,\dots,z_n]_{\le N}$, the space of polynomials
of degree at most $N$. Namely, let $W\in \mathbb C[z_1,\dots,z_n]$ be a homogeneous polynomial of degree $d$ (in physical terminology, a Landau--Ginzburg superpotential). Consider the sesquilinear form on $\mathbb C[z_1,\dots,z_n]$ defined by the formula{\samepage
\[
(P,Q)_W=\int_{\mathbb C^n}P(z)\overline {Q(z)}{\rm e}^{-|z|^2+W(z)-\overline{W(z)}}{\rm d}z{\rm d}\overline z.
\]
It is evident that this form is $\mathbb Z/d$-invariant, i.e., $(P,Q)_W=(P_*,Q_*)_W$, where
$P_*(z):=P\big({\rm e}^{\frac{2\pi {\rm i}}{d}} z\big)$.}

It is clear that the form $(\,,\,)_0$ is filtered-nondegenerate, as the monomial basis is orthogonal with respect to $(\,,\,)_0$, with positive norms. Hence $(\,,\,)_W$ is filtered-nondegenerate for sufficiently generic $W$ (outside the zero sets of a countable collection of polynomials on $S^d\mathbb C^{n*}$ regarded as a real vector space).
Moreover, while $(\,,\,)_W$ is not, in general, Hermitian-symmetric, we have
\[
(P,Q)_W=\overline {(Q_*,P_*)_W}.
\]
Hence if for some $W$ the form $(\,,\,)_W$ is filtered-nondegenerate then the resulting homogeneous form $\langle\,,\,\rangle_W$ on $\mathbb C[z_1,\dots,z_n]$ is Hermitian symmetric.

Recently D.\ Gaiotto, motivated by the study of Landau--Ginzburg boundary conditions for 3d free hypermultiplets, proposed the following conjecture (in the more general setting of quasi-homogeneous $W$; see \cite[Sections 7.6 and 7.7]{G}).

\begin{Conjecture}\label{c1}\quad
\begin{itemize}\itemsep=0pt
\item[$(i)$] If the form $(\,,\,)_W$ is filtered-nondegenerate, then the form $\langle\,,\,\rangle_W$ is positive definite.
\item[$(ii)$] The form $(\,,\,)_W$ is filtered-nondegenerate for all $W\in S^d\mathbb C^{n*}$.
\end{itemize}
\end{Conjecture}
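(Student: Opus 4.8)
The plan is to establish part $(ii)$ first and then deduce part $(i)$ from it by a deformation argument. Fix $N$ and let $G_N(W)$ be the Gram matrix of $(\,,\,)_W$ on $F_NV$ in the monomial basis; its entries are real-analytic functions of $W\in S^d\mathbb C^{n*}$, and $(\,,\,)_W$ is filtered-nondegenerate precisely when $\det G_N(W)\neq 0$ for every $N$, so $(ii)$ is the assertion that each function $\det G_N$ is nowhere vanishing. Granting $(ii)$: for every $W$ the form $\langle\,,\,\rangle_W$ is defined and, by the identity $(P,Q)_W=\overline{(Q_*,P_*)_W}$ recalled above, Hermitian on each $\operatorname{gr}_NV$, and it is nondegenerate there; thus $W\mapsto\langle\,,\,\rangle_W|_{\operatorname{gr}_NV}$ is a continuous family of nondegenerate Hermitian forms over the connected space $S^d\mathbb C^{n*}$, so its signature is independent of $W$; and at $W=0$ the monomials are already orthogonal with positive norms, so $\langle\,,\,\rangle_0|_{\operatorname{gr}_NV}$ is positive definite. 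Hence $\langle\,,\,\rangle_W|_{\operatorname{gr}_NV}$ is positive definite for all $W$, which is $(i)$. So everything hinges on $(ii)$.

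For $(ii)$ I would use the Bargmann--Fock realization of the form. Writing $\mathrm e^{W-\overline W}=\mathrm e^{W}\cdot\overline{\mathrm e^{-W}}$ gives $(P,Q)_W=\langle M_{\mathrm e^{W}}P,\,M_{\mathrm e^{-W}}Q\rangle_{\mathcal F}$, where $\langle\,,\,\rangle_{\mathcal F}$ is the Fock inner product with Gaussian weight $\mathrm e^{-|z|^2}$ and $M_f$ denotes multiplication by $f$. Consequently $G_N(W)$ equals, up to a positive diagonal factor $\operatorname{diag}\big(\|z^\alpha\|_{\mathcal F}^2\big)$, the degree-$\le N$ truncation of the operator $T_W:=(M_{\mathrm e^{-W}})^\dagger M_{\mathrm e^{W}}=\mathrm e^{-\overline W(\partial)}M_{\mathrm e^{W}}$ (here $\overline W(\partial)$ is the constant-coefficient differential operator obtained from $\overline W$, using $M_{z_i}^\dagger=\partial_i$). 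Thus $(ii)$ becomes: the truncated matrix of $T_W$ has nonzero determinant for every $N$ and every $W$. Note that $M_{\mathrm e^{W}}$ is degree-raising and block-unipotent-upper-triangular for the degree filtration, while $\mathrm e^{-\overline W(\partial)}$ is degree-lowering and block-unipotent-lower-triangular.

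When $d=1$ the operators $M_W$ and $\overline W(\partial)$ commute up to a scalar (a Heisenberg relation), so $T_W$ can be reordered as $M_{\mathrm e^{W}}\circ(\text{translation }z\mapsto z-\overline b)$ with $W=\sum_i b_iz_i$, i.e.\ as (degree-raising unipotent)$\,\circ\,$(degree-lowering unipotent). In that order no ``spillover'' above degree $N$ occurs when one truncates, so the truncated matrix is a product of two unipotent triangular matrices and has determinant $1$; hence $\det G_N(W)=\prod_{|\alpha|\le N}\|z^\alpha\|_{\mathcal F}^2>0$ for every linear $W$ and every $n$, giving $(ii)$ and then $(i)$ in dimension one. (Equivalently: after translating $z$ the weight is a bona fide positive measure, so the Gram matrices are moment matrices and positivity is Bochner's theorem.)

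For $d\ge 2$ the same reformulation applies, but $[M_W,\overline W(\partial)]$ is now a differential operator of order $d-1$, not a scalar, so disentangling $T_W=\mathrm e^{-\overline W(\partial)}M_{\mathrm e^{W}}$ into (raising)$\,\circ\,$(lowering) form leaves a nontrivial correction operator, and truncation no longer collapses to a product of unipotent triangular blocks — the truncated matrix is the identity only to leading order. This is the main obstacle. The plan for making headway is to control the correction in structured cases: for quadratic $W$ via the metaplectic/oscillator representation, where $M_{\mathrm e^{W}}$ and $\mathrm e^{-\overline W(\partial)}$ lie in a copy of the symplectic group and $T_W$ becomes an explicit Gaussian operator; for separable $W=\sum_i W_i(z_i)$ by factoring the integral into one-variable pieces; and for general homogeneous $W$ by writing it as a combination of $d$-th powers of linear forms and running a multi-contour Gaussian version of the $d=1$ computation while bounding the cross-terms — alternatively, one might hope to show directly that the Gram matrices $G_N(W)$ are totally positive, which would yield both parts at once. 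Carrying out that last step rigorously — equivalently, handling the non-Heisenberg correction, equivalently, coping with the fact that $\mathrm e^{2\mathrm i\,\mathrm{Im}\,W}$ is not a positive-definite function once $\deg W\ge 2$ so that Bochner does not apply to the integrand directly — is the crux, and is why at present only these special $W$ appear accessible.
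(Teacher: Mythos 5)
The statement you are addressing is Conjecture~\ref{c1}, which the paper does not prove and which remains open; the paper only establishes special cases (Theorem~\ref{t1}: the case $n=1$, any $d$, via Schoenberg's characterization of totally positive functions; Theorem~\ref{t2} and its corollary: the low-degree part of the form, $\ell+n\le d$, via Bochner's theorem). Measured against that, your proposal does not close the gap either, and in fact its completed content is strictly weaker than the paper's results. Your reduction of (i) to (ii) by constancy of signature over the connected space $S^d\mathbb C^{n*}$ is correct and is exactly the deformation argument the paper records after stating the conjecture. But the only instance of (ii) you actually carry through is $d=1$ (linear $W$), where the Heisenberg relation lets you reorder $T_W$ into raising-times-lowering unipotent form; this is the trivial Gaussian case the paper already dismisses with ``easy to check directly for $d\le 2$.'' (Even there, be careful: after ``translating'' the weight is not literally a positive measure --- the shift is by a complex vector, so the argument should be phrased as a contour shift or, as you also do, purely through the unipotent-triangular determinant computation.)

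For $d\ge 2$ --- which is the entire content of the conjecture --- everything in your write-up is a programme, not a proof: the metaplectic treatment of quadratic $W$, the separable case, the expression of $W$ as a sum of $d$-th powers with ``bounded cross-terms,'' and the hope that the Gram matrices $G_N(W)$ are totally positive are all stated as plans, and you acknowledge the crux (the non-scalar commutator $[M_W,\overline W(\partial)]$, equivalently the failure of $\mathrm e^{2\mathrm i\operatorname{Im}W}$ to be positive definite) is unresolved. Note also that your completed cases do not even recover the paper's theorems: Theorem~\ref{t1} handles $n=1$ with \emph{arbitrary} $d$, where the key point is that the radial functions $F_p\big(\mathrm e^{u/2}\big)$ are strictly totally positive --- proved by computing their Fourier transforms as ratios of Gamma functions in Laguerre--Polya form and convolving with $\mathrm e^{-\mathrm e^x}$ --- and Theorem~\ref{t2} handles the $N=0$ (and more generally degree $\le d-n$) part in all dimensions. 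Your closing suggestion about total positivity of the Gram data is the right instinct --- it is essentially how the paper's one-variable proof works --- but as written your proposal proves only the $d=1$ case of the conjecture and should not be presented as a proof of the statement.
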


Note that the form $\langle\,,\,\rangle_0$ is clearly positive definite.
Thus part~(i) of Conjecture~\ref{c1} follows from part~(ii) by a deformation argument.
In fact, part~(i) follows even from a weaker version of part (ii), stating that $(\,,\,)_W$ is filtered-nondegenerate on a dense
connected subset of $S^d\mathbb C^{n*}$. Moreover, we already know
that this holds on a dense subset, but it is not clear that this subset is connected, since we removed zero sets of {\it real} polynomials.

In particular, for $N=0$ Conjecture \ref{c1}\,(ii) reduces to

\begin{Conjecture}\label{c2} The real number
\[
I(W):=\int_{\mathbb C^n}{\rm e}^{-|z|^2+W(z)-\overline{W(z)}}{\rm d}z{\rm d}\overline z
\]
is strictly positive for all $W$ $($or, equivalently, nonzero for all $W)$.
\end{Conjecture}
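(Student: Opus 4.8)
The plan is to realize $I(W)$ as the Fourier transform of an explicit positive measure and then prove positivity by writing that measure as a superposition of Gaussians. Since $W(z)-\overline{W(z)}=2{\rm i}\operatorname{Im}W(z)$ is purely imaginary, the integrand has absolute value ${\rm e}^{-|z|^2}$, so $I(W)$ converges absolutely, and $I(W)$ is real by the identity $(P,Q)_W=\overline{(Q_*,P_*)_W}$ applied to $P=Q=1$. Writing $W(z)=\sum_{|\alpha|=d}c_\alpha z^\alpha$, we have $2\operatorname{Im}W(z)=\langle\xi(W),\Phi(z)\rangle$, where $\xi(W)\in\R^M$, $M=2\binom{n+d-1}{d}$, collects the real and imaginary parts of the $c_\alpha$ and $\Phi\colon\C^n\to\R^M$ collects the real and imaginary parts of the monomials $z^\alpha$. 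Hence, with $\mu$ the Gaussian measure ${\rm e}^{-|z|^2}{\rm d}z\,{\rm d}\overline z$ on $\C^n$, we have $I(W)=\widehat{\Phi_*\mu}(\xi(W))$; by Bochner's theorem $\xi\mapsto\widehat{\Phi_*\mu}(\xi)$ is positive definite, which recovers reality of $I(W)$ and the bound $|I(W)|\le I(0)$, but not positivity --- for that we must describe $\Phi_*\mu$ precisely.

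The heart of the argument is the case $n=1$, in which $\Phi$ factors through $z\mapsto z^d$. A direct polar-coordinate computation shows that, up to a positive factor, the pushforward of $\mu$ under $z\mapsto z^d$ is the rotation-invariant measure on $\C$ whose Lebesgue density is $g(\rho)=\rho^{2/d-2}{\rm e}^{-\rho^{2/d}}$; so, up to a positive factor, $I(W)$ equals the Fourier transform at $\xi(W)$ of this rotation-invariant density on $\R^2$. The key point is that $G(u):=g(\sqrt u)=u^{1/d-1}{\rm e}^{-u^{1/d}}$ is completely monotone on $(0,\infty)$: $u^{1/d-1}=u^{-(1-1/d)}$ is completely monotone, ${\rm e}^{-u^{1/d}}$ is completely monotone because $u\mapsto u^{1/d}$ is a Bernstein function, and a product of completely monotone functions is completely monotone. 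By the Hausdorff--Bernstein--Widder theorem, $G(u)=\int_{(0,\infty)}{\rm e}^{-ut}\,{\rm d}\tau(t)$ for a nonzero nonnegative measure $\tau$, which moreover has no atom at $0$ since $G(u)\to0$ as $u\to\infty$. Thus $g(|w|)=\int_{(0,\infty)}{\rm e}^{-t|w|^2}\,{\rm d}\tau(t)$ exhibits $\Phi_*\mu$ as a superposition of centered Gaussians, and interchanging the integrations (valid by Fubini, since $\int_{(0,\infty)}t^{-1}\,{\rm d}\tau(t)<\infty$) gives
\[
I(W)=c\int_{(0,\infty)}\biggl(\int_{\R^2}{\rm e}^{{\rm i}\langle\xi(W),w\rangle-t|w|^2}\,{\rm d}w\biggr){\rm d}\tau(t)=c'\int_{(0,\infty)}\frac1t\,{\rm e}^{-|\xi(W)|^2/(4t)}\,{\rm d}\tau(t)>0,\qquad c,c'>0.
\]

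For $n\ge2$ I would treat three families. First, if $W=\sum_kW_k$ with the $W_k$ in pairwise disjoint sets of variables, then $I(W)=\prod_kI(W_k)$ (times positive Gaussian factors for any unused variables), so it suffices to treat the pieces; in particular $W=\sum_ic_iz_i^d$ reduces to the case $n=1$. Second, for a squarefree monomial $W=c\,z_{i_1}\cdots z_{i_d}$ (requiring $n\ge d$), pushing $\mu$ forward under $W\colon\C^n\to\C$ again yields a rotation-invariant measure, by equivariance under the coordinate torus; its radial profile is the law of $|z_{i_1}|^2\cdots|z_{i_d}|^2$, a product of $d$ independent standard exponential random variables, whose density is completely monotone (inductively: conditioning on the product of the last $d-1$ factors reduces to a single exponential, and a mixture of completely monotone functions is completely monotone), so the argument of the previous paragraph applies verbatim. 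Third, for $d=2$ and $W=\tfrac12z^{\top}Cz$ with $C$ a complex symmetric matrix, $I(W)=c\int_{\R^{2n}}{\rm e}^{-|v|^2+{\rm i}\,v^{\top}Mv}\,{\rm d}v$ for an explicit real symmetric matrix $M$; the matrix $J=\bigl(\begin{smallmatrix}0&I\\-I&0\end{smallmatrix}\bigr)$ satisfies $JMJ^{-1}=-M$, so the spectrum of $M$ is symmetric about $0$, whence $\det(I-{\rm i}M)=\prod_k(1+\mu_k^2)>0$ and $I(W)=c\,\pi^n\prod_k(1+\mu_k^2)^{-1/2}>0$, the square root being the one continued from $M=0$.

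I expect the main obstacle to be the general case in $n\ge2$ variables. There $\Phi(\C^n)$ is a proper real-algebraic subvariety of $\R^M$, so $\Phi_*\mu$ is far from absolutely continuous and is not a superposition of nondegenerate Gaussians; and pushing $\mu$ forward under $W\colon\C^n\to\C$ produces a measure on $\R^2$ that is in general only $\Z/d$-invariant rather than rotation-invariant, so the complete-monotonicity criterion no longer applies. Establishing strict positivity of the (Bochner-)positive-definite function $\xi\mapsto\widehat{\Phi_*\mu}(\xi)$ in this generality appears to be the crux of Conjecture~\ref{c2} in full, and I expect it to require a genuinely new idea.
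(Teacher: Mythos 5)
Your proposal does not prove Conjecture~\ref{c2} in general, and you say so explicitly; but neither does the paper --- the statement is a conjecture, and the paper's result in its direction is Theorem~\ref{t2}: $I(W)>0$ whenever $n\le d$. So the fair comparison is between your verified cases and that theorem. Your one-variable argument is correct and is a genuinely different route: you identify the pushforward of the Gaussian measure under $z\mapsto z^d$ as the rotation-invariant density $|w|^{2/d-2}{\rm e}^{-|w|^{2/d}}$, observe it is completely monotone as a function of $|w|^2$, hence a Bernstein--Widder mixture of centered Gaussians, hence has strictly positive Fourier transform. The paper instead computes the Fourier transform of the radial profile explicitly as a ratio of Gamma functions, $G_p(s)=c\,\Gamma({\rm i}s)\Gamma(p+1-{\rm i}ds)/\Gamma(1-{\rm i}s)$ (Section~\ref{osint}), and invokes Bochner's theorem for Theorem~\ref{t2} (and Schoenberg's characterization of totally positive functions for the stronger Theorem~\ref{t1}); your argument is more elementary, and pleasantly, if run with the extra radial weight $|w|^{2p}$ it shows $u^{(p+1)/d-1}{\rm e}^{-u^{1/d}}$ is completely monotone exactly for $p\le d-1$, recovering the paper's sharp range for positivity of $F_p$. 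Your disjoint-variables factorization, squarefree-monomial, and $d\le 2$ cases are also fine, but they either reduce to $n\le d$ after integrating out unused variables or coincide with the Gaussian case the paper already calls easy.

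The concrete idea you are missing, relative to what the paper actually establishes, is the slicing of $\C^n$ by complex lines through the origin. Writing $z=w\zeta$ with $|\zeta|=1$, the restriction of a homogeneous $W$ of degree $d$ to the line $\C\zeta$ is $W(\zeta)w^d$, and the Jacobian contributes the radial weight $|w|^{2n-2}$, so that up to positive constants
\[
I(W)=\int_{\C\mathbb P^{n-1}}F_{n-1}(|W(\zeta)|)\,{\rm d}\zeta\,{\rm d}\overline\zeta ,
\]
and positivity of the one-variable function $F_{n-1}$ --- which your own completely-monotone criterion delivers precisely in the range $n-1\le d-1$ --- yields $I(W)>0$ for \emph{every} $W$ with $n\le d$, e.g.\ every binary cubic, which none of your families (diagonal $W$, sums over disjoint variable sets, squarefree monomials, $d\le2$) reach. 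In particular, your diagnosis that one must understand the full pushforward under $W\colon\C^n\to\C$, which is only $\Z/d$-invariant, overstates the difficulty in the regime $n\le d$: fibering over $\C\mathbb P^{n-1}$ sidesteps that pushforward entirely. The genuinely open territory is $n>d$ beyond such special shapes, where indeed neither you nor the paper has an argument.
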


These conjectures are easy to check directly for $d\le 2$ using Gaussian integrals, but already the case $d=3$ is not known.

We prove two results in the direction of these conjectures.

\begin{Theorem}\label{t1} Conjecture {\rm\ref{c1}} holds for
$n=1$.
\end{Theorem}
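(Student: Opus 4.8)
Since part~(i) of Conjecture~\ref{c1} follows from part~(ii) by the deformation argument recalled above, it is enough to prove part~(ii) for $n=1$, and in fact I would aim for the stronger statement that all Gram determinants
\[
D_N(W):=\det\big[(z^i,z^j)_W\big]_{0\le i,j\le N}
\]
are strictly positive. Indeed, $D_N(W)\neq 0$ for all $N$ is exactly filtered-nondegeneracy, while a standard computation with the Gram--Schmidt biorthogonalization gives $\langle\overline{z^N},\overline{z^N}\rangle_W=D_N(W)/D_{N-1}(W)$; since for $n=1$ each $\operatorname{gr}_N\C[z]$ is one-dimensional and the homogeneous components are mutually orthogonal, positivity of all the $D_N$ yields positive definiteness of $\langle\,,\,\rangle_W$. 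For $n=1$ we have $W=az^d$ with $a\in\C$, and replacing $z$ by $\zeta z$ for a suitable $\zeta$ with $|\zeta|=1$ reduces to $a>0$. The $\Z/d$-invariance makes $\C[z]$ the orthogonal direct sum of the blocks $V_s=\bigoplus_{p\ge0}\C z^{s+pd}$ ($0\le s<d$), so $D_N(W)$ is a product of leading principal minors of the block Gram matrices $M^{(s)}_{pq}:=(z^{s+pd},z^{s+qd})_W$, and it suffices to prove positivity of each such minor. Passing to polar coordinates and integrating out the angle (via the generating function of Bessel functions) yields
\[
M^{(s)}_{pq}=\mathrm{const}\cdot\int_0^\infty r^{\,2s+(p+q)d+1}\,e^{-r^2}\,J_{q-p}\big(2a r^{d}\big)\,dr ,
\]
and the substitution $u=z^d$ identifies $M^{(s)}$ with $\big[\int_{\C}u^p\bar u^q\,\omega_s(|u|)\,e^{au-\bar a\bar u}\,dA(u)\big]_{p,q}$ for the positive radial weight $\omega_s(r)=r^{(2(s-d+1))/d}e^{-r^{2/d}}$; so the one-variable problem for $W=az^d$ is the one-variable problem for the \emph{linear} superpotential $au$ against a deformed Gaussian weight.

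Everything thus reduces to the following: for $a>0$ and any positive radial weight $\omega$ on $\C$, the matrix $M_{pq}=\int_{\C}u^p\bar u^q\,\omega(|u|)\,e^{a(u-\bar u)}\,dA(u)$ has positive leading principal minors. Expanding $e^{a(u-\bar u)}$ and applying the Cauchy--Binet formula presents $\det[M_{pq}]_{0\le p,q\le K}$ as an alternating series in $a^2$ with positive coefficients, so the positivity is genuine content and not bookkeeping. To extract it, write $u=x+iy$; since $e^{a(u-\bar u)}=e^{2iay}$ depends on $y$ alone, for a polynomial $f(u)=\sum_p c_pu^p$ one has
\[
\sum_{p,q}c_p\overline{c_q}\,M_{pq}=\int_{\R}e^{2iay}\,\Phi_f(y)\,dy ,\qquad \Phi_f(y):=\int_{\R}|f(x+iy)|^2\,\omega\big(\sqrt{x^2+y^2}\big)\,dx\ \ge\ 0 .
\]
The positivity we need is then controlled by the values at $2a$ of Fourier transforms of the nonnegative profiles $\Phi_f$ --- precisely the situation Bochner's theorem addresses. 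The catch is that $\Phi_f\ge0$ by itself only makes $\widehat{\Phi_f}$ a positive-\emph{definite} function (the converse half of Bochner), which need not be nonnegative; so the theorem does not apply off the shelf.

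The core of the argument is to close this gap, and this is the step I expect to be the main obstacle. The route I would pursue is to exploit holomorphy of $f$ --- via $f(x+iy)=(e^{iy\partial_x}f)(x)$, i.e.\ Fourier analysis in $x$ --- to rewrite the kernel $f,g\mapsto \int e^{2iay}\!\int f(x+iy)\overline{g(x+iy)}\,\omega\,dx\,dy$ as built from an honest positive measure on the line (equivalently, to display the relevant combination of profiles $\Phi_f$ as an autocorrelation $\phi\star\widetilde\phi$, so that the transform equals $|\widehat\phi|^2\ge0$); Bochner's theorem then delivers the positivity, and a strictness refinement gives filtered-nondegeneracy. A second possible route is to prove total positivity of a matrix naturally associated to $\omega_s$ and deduce positivity of its principal minors. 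As a consistency check, for $d\le2$ the weights $\omega_s$ are ordinary Gaussians, the relevant transforms are computed by completing the square, and one recovers the classical easy cases; for $d\ge3$ the analogous transforms are Airy-type oscillatory integrals, and their nonnegativity is exactly the substantive point. The hard part is therefore to produce this positive-measure (or totally positive) structure uniformly in $s$, $d$, $a$, rather than any of the preliminary reductions.
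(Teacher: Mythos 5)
Your preliminary reductions are sound and in fact coincide with the paper's setup (rotation to $a>0$, decomposition into residue blocks mod $d$, and the substitution $u=z^d$ converting the block Gram matrices into moment matrices of a linear exponential against a radial weight $\omega_s$). But the core positivity step is missing — you say so yourself — and, worse, the intermediate claim you reduce to is false in the generality you state it. You claim that for every positive radial weight $\omega$ the matrix $M_{pq}=\int_{\mathbb{C}}u^p\bar u^q\,\omega(|u|)\,{\rm e}^{a(u-\bar u)}\,{\rm d}A(u)$ has positive leading principal minors. Already the $1\times 1$ minor fails: take $\omega(r)=r^{2/d}{\rm e}^{-r^{2/d}}$ (the weight your substitution would produce from the disallowed exponent $s=d$); undoing the substitution, $M_{00}$ is, up to a rotation of $u$ and a positive constant, the function $F_d(a)$ of Section~\ref{osint}, and the Remark following Proposition~\ref{stpos} shows $F_d(a)\sim -\frac{\Gamma(1+\frac1d)}{\Gamma(1-\frac1d)d}\,a^{-2(1+\frac1d)}<0$ for large $a$. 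Consequently no argument uniform in the radial weight — in particular the autocorrelation/Bochner device you propose, which if it worked would prove exactly this general statement — can close the gap; the restriction $0\le s\le d-1$ must enter analytically, not just as bookkeeping.

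In the paper this restriction enters at the decisive point: the Fourier transform in $u$ of $F_p\big({\rm e}^{u/2}\big)$ is computed explicitly (via a contour rotation) to be a positive constant times $\frac{\Gamma({\rm i}s)}{\Gamma(1-{\rm i}s)}\Gamma(p+1-{\rm i}ds)$, and precisely when $p\le d-1$ the poles of $\Gamma(p+1-{\rm i}ds)$ cancel all zeros of $1/\Gamma(1-{\rm i}s)$, leaving a zero-free Laguerre--P\'olya form $\Gamma({\rm i}s)H_p(s)$. The easy direction of Schoenberg's theorem then writes $F_p\big({\rm e}^{u/2}\big)={\rm e}^{-{\rm e}^{x}}*h_p$ with $h_p$ totally positive, giving strict total positivity, and the Wronskian (extended total positivity) criterion converts this into strict positivity of all the block Gram minors. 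Note also that the strictness is essential: even a correct Bochner-type nonnegativity of single integrals would give neither filtered-nondegeneracy nor positivity of the higher-order minors, which is exactly the quantitative content your "strictness refinement" leaves unaddressed. So the proposal reproduces the paper's reductions but lacks its actual proof, and the pivotal lemma as you formulated it is contradicted by the paper's own sharpness remark.
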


Theorem \ref{t1} is proved in Section \ref{sec2} using (the ``easy direction'' of) Schoenberg's theorem on totally positive functions. The relevant inner product in this case is defined by \cite[formula~(7.34)]{Ga}.

\begin{Theorem}\label{t2} Conjecture {\rm\ref{c2}} holds if $n\le d$.
\end{Theorem}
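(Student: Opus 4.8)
The strategy is to collapse $I(W)$ to a one-variable integral whose positivity is a classical special-function fact, with the hypothesis $n\le d$ entering at a single, precise point.

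\emph{Step 1: rotational symmetry.} Since $W$ is homogeneous of degree $d$, for each $\alpha\in\mathbb R$ the unitary substitution $z\mapsto {\rm e}^{-{\rm i}\alpha/d}z$ (which preserves ${\rm e}^{-|z|^2}\,{\rm d}z\,{\rm d}\bar z$) turns $I\big({\rm e}^{{\rm i}\alpha}W\big)$ into $I(W)$, so $I({\rm e}^{{\rm i}\alpha}W)=I(W)$ for all $\alpha$. Averaging over $\alpha\in[0,2\pi)$ and interchanging integrals (legitimate, as the integrand has modulus ${\rm e}^{-|z|^2}$),
\[
I(W)=\int_{\mathbb C^n}{\rm e}^{-|z|^2}\left(\frac1{2\pi}\int_0^{2\pi}\exp\!\big(2{\rm i}\,{\rm Im}({\rm e}^{{\rm i}\alpha}W(z))\big)\,{\rm d}\alpha\right){\rm d}z\,{\rm d}\bar z .
\]
For fixed $z$, writing $W(z)=|W(z)|{\rm e}^{{\rm i}\vartheta}$, the inner average is $\frac1{2\pi}\int_0^{2\pi}{\rm e}^{2{\rm i}|W(z)|\sin(\alpha+\vartheta)}\,{\rm d}\alpha=J_0(2|W(z)|)$ by periodicity and Bessel's integral representation of $J_0$. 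Hence
\[
I(W)=\int_{\mathbb C^n}{\rm e}^{-|z|^2}\,J_0\big(2|W(z)|\big)\,{\rm d}z\,{\rm d}\bar z ,
\]
which, incidentally, makes the reality of $I(W)$ manifest.

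\emph{Step 2: reduction to one variable.} Polar coordinates $z=r\zeta$ ($r\ge0$, $\zeta\in S^{2n-1}$) together with $|W(r\zeta)|=r^d|W(\zeta)|$ give
\[
I(W)=\int_{S^{2n-1}}\Phi\big(|W(\zeta)|\big)\,{\rm d}\sigma(\zeta),\qquad
\Phi(b):=\int_0^\infty {\rm e}^{-r^2}J_0\big(2br^d\big)\,r^{2n-1}\,{\rm d}r .
\]
It therefore suffices to prove $\Phi(b)>0$ for every $b\ge0$; then $I(W)>0$ since ${\rm d}\sigma$ has positive total mass.

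\emph{Step 3: positivity of $\Phi$, where $n\le d$ is used.} Substituting $u=r^d$,
\[
\Phi(b)=\frac1d\int_0^\infty {\rm e}^{-u^{2/d}}J_0(2bu)\,u^{m-1}\,{\rm d}u,\qquad m:=\frac{2n}{d}.
\]
Because $1/d\le1$, the map $s\mapsto {\rm e}^{-s^{1/d}}$ is completely monotone on $[0,\infty)$ (being ${\rm e}^{-(\cdot)}$ composed with the Bernstein function $s\mapsto s^{1/d}$), so by Bernstein's theorem ${\rm e}^{-s^{1/d}}=\int_0^\infty {\rm e}^{-ts}\,{\rm d}\nu(t)$ for a positive measure $\nu$ of total mass $1$; putting $s=u^2$ gives ${\rm e}^{-u^{2/d}}=\int_0^\infty {\rm e}^{-tu^2}\,{\rm d}\nu(t)$. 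Inserting this and using Fubini reduces matters to
\[
\int_0^\infty {\rm e}^{-tu^2}J_0(2bu)\,u^{m-1}\,{\rm d}u>0\qquad (t>0,\ b\ge0).
\]
By the classical formula for this integral it equals $\dfrac{\Gamma(m/2)}{2t^{m/2}}\,{}_1F_1\!\big(\tfrac m2;1;-\tfrac{b^2}{t}\big)$, and here $\tfrac m2=\tfrac nd\in(0,1]$ \emph{exactly because} $n\le d$. Finally, ${}_1F_1(a;1;-x)$ is strictly positive for all $x\ge0$ whenever $0<a\le1$: for $a=1$ it is ${\rm e}^{-x}$, and for $0<a<1$ the Euler integral
\[
{}_1F_1(a;1;-x)=\frac1{\Gamma(a)\Gamma(1-a)}\int_0^1 {\rm e}^{-xs}\,s^{a-1}(1-s)^{-a}\,{\rm d}s
\]
exhibits it as the integral of a positive function. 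Hence $\Phi(b)>0$, and Theorem~\ref{t2} follows.

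The crux is Step 3. The integrand ${\rm e}^{-|z|^2}J_0(2|W(z)|)$ is genuinely not pointwise nonnegative — $J_0$ oscillates — so cancellation cannot be avoided, and the real content is to isolate the feature of the radial profile that restores positivity after integration; this is precisely the exponent bound $m\le2$, i.e.\ $n\le d$. (For $m>2$ the Kummer function ${}_1F_1(m/2;1;-x)$ does change sign — e.g.\ ${}_1F_1(2;1;-x)=(1-x){\rm e}^{-x}$ — consistent with the conjecture being open in that range.) The remaining points, namely the two interchanges of integration and the finiteness of $\int_0^\infty t^{-m/2}\,{\rm d}\nu(t)$, are routine, since $|J_0|\le1$ and $\int_0^\infty {\rm e}^{-u^{2/d}}u^{m-1}\,{\rm d}u=\tfrac d2\,\Gamma(n)<\infty$.
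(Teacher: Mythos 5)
Your argument is correct, and its key step is genuinely different from the paper's. Both proofs reduce $I(W)$ to positivity of the same one-variable radial profile: your $\Phi(b)=\int_0^\infty {\rm e}^{-r^2}J_0\big(2br^d\big)r^{2n-1}\,{\rm d}r$ is, up to a positive constant, the function $F_{n-1}(b)$ of Section~\ref{osint} (your phase-averaging producing $J_0$ is exactly the angular integration hidden in the paper's passage from $I(\rho,W)$ to $J(z)=F_{n+\ell-1}(|W(z)|)$). The paper then proves $F_{n-1}\ge 0$ by Fourier analysis in the logarithmic variable: the Fourier transform of $F_{n-1}\big({\rm e}^{u/2}\big)$ is proportional to $\frac{\Gamma({\rm i}s)}{\Gamma(1-{\rm i}s)}\Gamma(n-{\rm i}ds)$, and for $n\le d$ the zeros of $1/\Gamma(1-{\rm i}s)$ are cancelled by poles of $\Gamma(n-{\rm i}ds)$, so the product is positive definite by Bochner's theorem (Example~\ref{gammafu1}); strict positivity of $I(W)$ then requires the additional remark that $J$ is real-analytic and not identically zero. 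You instead prove the pointwise bound $\Phi(b)>0$ directly: Bernstein's theorem for the completely monotone function ${\rm e}^{-s^{1/d}}$ writes ${\rm e}^{-u^{2/d}}$ as a positive superposition of Gaussians ${\rm e}^{-tu^2}$, the Gaussian--Bessel integral evaluates to $\frac{\Gamma(m/2)}{2t^{m/2}}\,{}_1F_1\big(\tfrac m2;1;-\tfrac{b^2}{t}\big)$ with $\tfrac m2=\tfrac nd\in(0,1]$, and the Euler integral (equivalently Kummer's transformation) gives strict positivity of that Kummer function. The hypothesis $n\le d$ enters at exactly one point in each argument (Gamma cancellation there, $n/d\le 1$ here), and your example ${}_1F_1(2;1;-x)=(1-x){\rm e}^{-x}$ mirrors the paper's sharpness remark about $F_d$. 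What your route buys: it bypasses the contour rotation used to compute $G_p(s)$ and avoids Bochner/Schoenberg entirely (at the price of invoking Bernstein's theorem and a classical ${}_1F_1$ formula), it yields strict positivity of the radial function so the final step is immediate, and it extends verbatim to the weighted statement of Theorem~\ref{pro1} by replacing $2n$ with $2n+2\ell$. Your justifications of the interchanges (absolute bound ${\rm e}^{-|z|^2}$, and Tonelli giving $\int_0^\infty{\rm e}^{-u^{2/d}}u^{m-1}\,{\rm d}u=\tfrac d2\Gamma(n)<\infty$, hence $\int t^{-m/2}\,{\rm d}\nu(t)<\infty$ and $\nu(\{0\})=0$) are adequate.
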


Theorem \ref{t2} is proved in Section \ref{sec3}; it easily follows from Bochner's theorem on positive definite functions.

\begin{Remark}\quad
\begin{itemize}\itemsep=0pt
\item[(1)] It is clear that if Conjecture \ref{c1} holds for
$W_1$ and $W_2$ then it holds for $(W_1\boxtimes W_2)(z,w):=W_1(z)W_2(w)$. Thus Theorem \ref{t1} implies
that Conjecture \ref{c1} holds for
\[
W(z_1,\dots,z_n)=a_1z_1^d+\dots+a_nz_n^d.
\]
More generally, by the same argument it holds for
\[
W(z_1,\dots,z_n)=a_1z_1^{d_1}+\dots+a_nz_n^{d_n}
\]
in the quasi-homogeneous setting.
\item[(2)] Let $t>0$. Making a change of variable $z\mapsto tz$ in the integral, we obtain
\[
(P,Q)_W=t^{2n}\int_{\mathbb C^n}P(tz)\overline {Q(tz)}{\rm e}^{-t^2|z|^2+t^{d}(W(z)-\overline{W(z)})}{\rm d}z{\rm d}\overline z.
\]
Thus setting $P_t(z):=P\big(t^{-1}z\big)$, we may define the inner product
\[
(P,Q)_{W,t}:=t^{-2n}(P_t,Q_t)_{t^{-d}W}=\int_{\mathbb C^n}P(z)\overline {Q(z)}{\rm e}^{-t^2|z|^2+W(z)-\overline{W(z)}}{\rm d}z{\rm d}\overline z.
\]
If $W$ defines an isolated singularity (i.e., the projective hypersurface $W=0$ is smooth), then we can take the limit $t\to 0$ and get the inner product
\[
(P,Q)_{W,0}=\int_{\mathbb C^n}P(z)\overline {Q(z)}{\rm e}^{W(z)-\overline{W(z)}}{\rm d}z{\rm d}\overline z
\]
(the integral is conditionally convergent). It is explained in \cite[Section 7.7 and Appendi\-ces~A--C]{G} that this inner product arises in 2d Landau--Ginzburg models, and Conjecture~\ref{c1} holds for it (namely, the positivity is shown by using the Morse flow to deform the contour of integration so that the integrand is manifestly positive). This implies that for any~$W$ defining an isolated singularity, Conjecture~\ref{c2} holds for~$\varepsilon^{-1}W$ for sufficiently small~$\varepsilon$.
\end{itemize}
\end{Remark}

\section[Proof of Theorem 1.3]{Proof of Theorem \ref{t1}}\label{sec2}

\subsection{Totally positive functions} We start with reviewing the theory
of totally positive functions which is used in the proof of Theorem \ref{t1}.
More details can be found in \cite{K2,K}.

\begin{Definition} A function $f\colon \mathbb R\to \mathbb R$
is \emph{totally positive} if for any real
\[
x_1<\dots<x_N,\qquad y_1<\dots<y_N,
\]
 we have
$
\det_{1\le i,j\le N} f(x_i-y_j)\ge 0$,
and is \emph{strictly totally positive} if
this inequality is strict ($\det_{1\le i,j\le N} f(x_i-y_j)>0$).
\end{Definition}

We will consider only continuous totally positive functions.

For a function $f\colon \mathbb R\to \mathbb R$, denote by $\widehat f$ its Fourier transform
\[
\widehat f(s)=\frac{1}{\sqrt{2\pi}}\int_{\mathbb R}{\rm e}^{{\rm i}sx}f(x){\rm d}x.
\]

\begin{Lemma}\label{l0} Let $f$ be a totally positive $L^1$-function. Suppose that for some $N$ and some $y_1<\dots<y_N$, the function $F_{\mathbf{y}}(x_1,\dots,x_N):=\det_{1\le i,j\le N} f(x_i-y_j)$ is identically zero. Then~${f=0}$.
\end{Lemma}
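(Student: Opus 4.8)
The plan is to argue by contradiction: suppose $f\neq 0$, and let $N$ be the smallest integer for which $F_{\mathbf y}\equiv 0$ for some $y_1<\dots<y_N$. If $N=1$ the conclusion is immediate, since $f(x-y_1)\equiv 0$ in $x$ forces $f\equiv 0$; so assume $N\ge 2$. Set $\mathbf y'=(y_1,\dots,y_{N-1})$. By minimality of $N$, the function $F_{\mathbf y'}$ is not identically zero, so I would fix reals $x_1<\dots<x_{N-1}$ with $\det_{1\le i,j\le N-1}f(x_i-y_j)\neq 0$. Expanding the $N\times N$ determinant $\det_{1\le i,j\le N}f(x_i-y_j)$ along its last row and using that $F_{\mathbf y}$ vanishes identically, I get, for every $x\in\mathbb R$, a linear relation $\sum_{j=1}^N c_j f(x-y_j)=0$, where $c_j$ equals $(-1)^{N+j}$ times the $(N,j)$ minor evaluated at the fixed points $x_1,\dots,x_{N-1}$. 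The point of the minimality assumption is precisely that $c_N=\det_{1\le i,j\le N-1}f(x_i-y_j)\neq 0$, so this relation among the translates $f(\cdot-y_1),\dots,f(\cdot-y_N)$ is nontrivial. (Alternatively, one can invoke the standard fact that if $\det[g_j(x_i)]_{i,j}$ vanishes for all choices of $x_1,\dots,x_N$ then $g_1,\dots,g_N$ are linearly dependent, and apply it to $g_j=f(\cdot-y_j)$.)

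The second step removes the relation by a Fourier argument. Since $f\in L^1(\mathbb R)$, its Fourier transform $\widehat f$ is a continuous function, and $\widehat{f(\cdot-y_j)}(s)={\rm e}^{{\rm i}sy_j}\widehat f(s)$. Applying the Fourier transform to the relation above therefore gives $p(s)\widehat f(s)=0$ for all $s\in\mathbb R$, where $p(s):=\sum_{j=1}^N c_j{\rm e}^{{\rm i}sy_j}$. This $p$ extends to an entire function of $s$ that is not identically zero: the characters ${\rm e}^{{\rm i}sy_j}$ attached to the distinct reals $y_j$ are linearly independent (a Vandermonde argument) and $c_N\neq 0$. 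Hence the zeros of $p$ in $\mathbb R$ are isolated, so $\widehat f$ vanishes on a dense open subset of $\mathbb R$; by continuity $\widehat f\equiv 0$, and by uniqueness of the Fourier transform $f=0$ --- contradicting our assumption. (If one wants $f\equiv 0$ pointwise rather than merely in $L^1$, this is immediate from the standing hypothesis that $f$ is continuous.)

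The only analytic ingredient is the $L^1$ hypothesis, used exactly to guarantee that $\widehat f$ is continuous; total positivity itself is not needed for this lemma and only fixes the class of functions to which it will be applied. The step I expect to require the most care is the very first one --- verifying that the linear relation among the translates of $f$ is genuinely nontrivial --- and the role of the minimal choice of $N$ is to hand us a nonvanishing $(N-1)\times(N-1)$ minor to serve as the coefficient $c_N$; I would present that as the crux of the proof.
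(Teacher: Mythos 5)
Your proof is correct and follows essentially the same route as the paper: expand the vanishing $N\times N$ determinant along the row in the free variable, use minimality of $N$ to make the coefficient $c_N$ (an $(N-1)\times(N-1)$ minor) nonzero, and take Fourier transforms to conclude $\widehat f=0$. The only difference is that you spell out why $\bigl(\sum_j c_j {\rm e}^{{\rm i}sy_j}\bigr)\widehat f(s)=0$ forces $\widehat f\equiv 0$ (isolated zeros of the nonzero exponential sum plus continuity of $\widehat f$), a step the paper leaves implicit.
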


\begin{proof} Let $N$ be the smallest such number. Expanding the determinant, we obtain
\[
\sum_{k=1}^{N} C_{k}(\boldsymbol{y},x_1,\dots,x_{N-1})f(x_N-y_k)=0,
\]
where $C_{k}(\boldsymbol{y},x_1,\dots,x_{N-1}):=(-1)^{N-k}\det_{1\le i\le N-1,1\le j\ne k\le N} f(x_i-y_j)$. Moreover, by assumption there exist $x_1<\dots<x_{N-1}$ such that $C_{N}(\boldsymbol{y},x_1,\dots,x_{N-1})\ne 0$.
Passing to Fourier transforms with respect to $x_N$, we have
\[
\Bigg(\sum_{k=1}^N C_k(\boldsymbol{y},x_1,\dots,x_{N-1}){\rm e}^{{\rm i}s y_k}\Bigg)\widehat f(s)=0.
\]
Thus $\widehat f=0$,
hence $f=0$.
\end{proof}

Let $f\colon \mathbb R\to \mathbb R$ be a bounded measurable function and $g\colon \mathbb R\to \mathbb R$ be an $L^1$-function. Recall that the convolution $f*g$
is the function
\[
(f*g)(x)=\int_{\mathbb R}f(x-y)g(y){\rm d}y.
\]

\begin{Proposition}[\cite{K2}]\label{proo1}\quad
\begin{itemize}\itemsep=0pt
\item[$(i)$] If $f$, $g$ are totally positive, then so is $f*g$.
\item[$(ii)$] If moreover $f$ is strictly totally positive, then so is $f*g$.
\end{itemize}
\end{Proposition}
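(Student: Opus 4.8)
The plan is to derive both parts from the \emph{basic composition formula}, the continuous analogue of the Cauchy--Binet identity. Fix $N$ and points $x_1<\dots<x_N$ and $z_1<\dots<z_N$, and set $h:=f*g$. After the substitution $u=y-z$ one has $h(x-z)=\int_{\mathbb R}f(x-y)g(y-z)\,{\rm d}y$, so I would first establish the identity
\[
\det_{1\le i,j\le N}h(x_i-z_j)=\int_{w_1<\dots<w_N}\bigg(\det_{1\le i,j\le N}f(x_i-w_j)\bigg)\bigg(\det_{1\le i,j\le N}g(w_i-z_j)\bigg){\rm d}w_1\cdots{\rm d}w_N.
\]
This is proved by writing each entry $h(x_i-z_j)$ as the above integral, expanding both sides via multilinearity and the antisymmetry of the determinant, and interchanging summation and integration; the interchange is legitimate because $f$ is bounded and $g\in L^1$, so every term is an absolutely integrable function of $(w_1,\dots,w_N)$. (This is exactly the basic composition formula; see \cite{K2,K}.)

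Granting the identity, part $(i)$ is immediate: when $w_1<\dots<w_N$ both determinants in the integrand are $\ge 0$ --- the first because $x_1<\dots<x_N$ and $f$ is totally positive, the second because $z_1<\dots<z_N$ and $g$ is totally positive --- so the integral is $\ge 0$, i.e., $f*g$ is totally positive.

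For part $(ii)$ I may assume $g\not\equiv 0$ (otherwise $f*g\equiv 0$). Since $f$ is strictly totally positive, $\det_{1\le i,j\le N}f(x_i-w_j)>0$ for \emph{every} $w_1<\dots<w_N$, so by the identity it suffices to prove that $\Phi(w_1,\dots,w_N):=\det_{1\le i,j\le N}g(w_i-z_j)$ is strictly positive on a subset of $\{w_1<\dots<w_N\}$ of positive Lebesgue measure. Here Lemma~\ref{l0}, applied to the totally positive $L^1$-function $g$, shows that $\Phi$ is not identically zero; since $g$ is continuous, $\Phi$ is continuous, so it is nonzero --- hence, by total positivity of $g$, strictly positive --- on a nonempty open subset of $\{w_1<\dots<w_N\}$. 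On that set both factors of the integrand are strictly positive and elsewhere the integrand is $\ge 0$, so $\det_{1\le i,j\le N}h(x_i-z_j)>0$, which is what we wanted.

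The step I expect to be the main obstacle is the composition formula itself --- specifically, pinning down the Fubini argument that justifies rearranging the iterated integral of a product of determinants, which is precisely where the hypotheses ``$f$ bounded'' and ``$g\in L^1$'' are used. Once that is in place the rest is sign bookkeeping, the only other delicate point being the appeal to Lemma~\ref{l0} together with continuity in part $(ii)$ to upgrade ``$\Phi\ge 0$'' to ``$\Phi>0$ on a set of positive measure.''
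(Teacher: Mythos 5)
Your proof is correct and follows essentially the same route as the paper: part $(i)$ via the Cauchy--Binet (basic composition) formula for the convolution determinant, and part $(ii)$ by combining that formula with Lemma~\ref{l0} (applied to $g$) plus continuity to get strict positivity of the integral. Your write-up just makes explicit the Fubini justification and the ``positive on an open set'' step that the paper leaves implicit.
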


\begin{proof} (i) follows from the Cauchy--Binet formula \cite[Lemma 0.1]{K2}:
\[
\det_{1\le i,j\le N}((f*g)(x_i-z_j))=\int_{y_1<\dots<y_N}\det_{1\le i,k\le N}f(x_i-y_k)\det_{1\le k,j\le N}g(y_k-z_j){\rm d}\boldsymbol{y}.
\]

(ii) follows from the Cauchy--Binet formula and Lemma \ref{l0}.
\end{proof}

\begin{Example}\label{exaa} The determinant
$\det_{1\le i,j\le n}({\rm e}^{u_iv_j})$ for $u_1<\dots<u_N$ and $v_1<\dots<v_N$
is positive \cite[Section~XIII.8]{Ga}. Thus, setting $u_i:={\rm e}^{x_i}$, $v_i:=-{\rm e}^{-y_i}$, we get that for any real $x_1<\dots<x_N$, $y_1<\dots<y_N$ the determinant \smash{$\det_{1\le i,j\le N}\big({\rm e}^{-{\rm e}^{x_i-y_j}}\big)$} is positive.
Hence the function
$
f(x)={\rm e}^{-{\rm e}^{x}}
$
is strictly totally positive (and bounded).
\end{Example}

Thus Proposition \ref{proo1} implies

\begin{Proposition}\label{stp}
For every totally positive $L^1$-function $g$, the function ${\rm e}^{-{\rm e}^{x}}*g$
is strictly totally positive.
\end{Proposition}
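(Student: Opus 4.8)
The plan is to obtain Proposition~\ref{stp} as an immediate combination of Example~\ref{exaa} and Proposition~\ref{proo1}(ii), so the proof is essentially a matter of checking that the hypotheses line up. First I would set $f(x):={\rm e}^{-{\rm e}^{x}}$ and record that, by Example~\ref{exaa}, $f$ is strictly totally positive; moreover it is manifestly bounded (its values lie in $(0,1)$) and continuous, hence measurable, so $f$ meets all the requirements placed on the first factor in Proposition~\ref{proo1}(ii). Next I would note that the given $g$ is, by assumption, a totally positive $L^1$-function, which is exactly what is required of the second factor there.

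With these identifications in place, the convolution ${\rm e}^{-{\rm e}^{x}}*g$ appearing in the statement is precisely the convolution $f*g$ to which Proposition~\ref{proo1}(ii) applies: the definition $(f*g)(x)=\int_{\mathbb R}f(x-y)g(y)\,{\rm d}y$ already writes the bounded factor first and the $L^1$ factor second, matching the order used here, so no appeal to commutativity of convolution is even needed. Applying Proposition~\ref{proo1}(ii) then yields directly that $f*g={\rm e}^{-{\rm e}^{x}}*g$ is strictly totally positive, which is the claim.

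There is no genuine obstacle here, since the two nontrivial ingredients have already been established earlier in the section: the strict total positivity of the Gumbel-type kernel ${\rm e}^{-{\rm e}^{x}}$, which rests on the positivity of $\det_{1\le i,j\le N}\big({\rm e}^{u_iv_j}\big)$ for increasing sequences, and the preservation of strict total positivity under convolution with a totally positive $L^1$-function, which rests on the Cauchy--Binet formula together with Lemma~\ref{l0}. The proof of Proposition~\ref{stp} is therefore just the assembly of these facts; the only point I would take care to state explicitly is the trivial verification that $f={\rm e}^{-{\rm e}^{x}}$ is bounded and measurable, so that the hypotheses of Proposition~\ref{proo1}(ii) are literally satisfied.
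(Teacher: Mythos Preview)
Your proposal is correct and matches the paper's own argument: the paper simply notes that Proposition~\ref{stp} follows from Proposition~\ref{proo1}(ii) together with Example~\ref{exaa}, which is exactly what you have spelled out. The only addition you make is the explicit (and trivial) verification that $f(x)={\rm e}^{-{\rm e}^x}$ is bounded and measurable, which is harmless.
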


For $f\in C^\infty(\mathbb R)$, consider the Wronskians
\[
\Delta_N(f,x):=\det_{0\le k,\ell\le N-1}\big((-1)^\ell f^{(k+\ell)}(x)\big).
\]

\begin{Definition}[\cite{K}] The function $f$ is said to be \emph{extended totally positive} if
$\Delta_N(f,x)$ is strictly positive for all $N$ and all $x\in \mathbb R$.
\end{Definition}

\begin{Proposition}[{\cite[p.\ 55]{K}; see also \cite[p.\ 41]{K2}}]\label{lee1}
 If $f\colon\mathbb R\to \mathbb R$
is analytic, then it is strictly totally positive if and only if
it is extended totally positive.
\end{Proposition}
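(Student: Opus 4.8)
The statement is an equivalence, and the plan is to treat the two implications separately; in both, the real content is an upgrade from nonnegativity to strict positivity.

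\textbf{Strictly totally positive $\Rightarrow$ extended totally positive.} First I would recover the Wronskians $\Delta_N(f,x)$ as iterated coalescence limits of the determinants $\det_{1\le i,j\le N}f(x_i-y_j)$. Fix $N$ and $x,y\in\mathbb{R}$. Keeping $y_1<\dots<y_N$ fixed and distinct, let $x_1<\dots<x_N$ all tend to $x$ and divide by the Vandermonde determinant $V(x_1,\dots,x_N)>0$; since the divided difference $[t_1,\dots,t_k]g$ tends to $g^{(k-1)}(t)/(k-1)!$ as $t_1,\dots,t_k\to t$, the limit exists and equals $\big(\prod_{k=0}^{N-1}k!\big)^{-1}\det_{1\le i,j\le N}f^{(i-1)}(x-y_j)$. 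Every quantity before the limit is a ratio of positive numbers (strict total positivity), so the limit is $\ge 0$. Now let $y_1<\dots<y_N$ tend to $y$ and divide by the Vandermonde determinant in the $y$'s; the same computation gives $\big(\prod_{k=0}^{N-1}k!\big)^{-2}\Delta_N(f,x-y)\ge 0$, and as $x,y$ are arbitrary, $\Delta_N(f,\cdot)\ge 0$ on all of $\mathbb{R}$. Next I would rule out $\Delta_N\equiv 0$: in that case the Wronskian $W\big(f,f',\dots,f^{(N-1)}\big)$ vanishes identically, so by analyticity $f,f',\dots,f^{(N-1)}$ are linearly dependent and $f$ obeys a constant-coefficient linear ODE of order $<N$; but then every translate $f(a-\cdot)$ lies in the reflected solution space, of dimension $<N$, so the rows of $\big(f(x_i-y_j)\big)_{i,j}$ are values of linearly dependent functions and the determinant vanishes, contradicting strict total positivity. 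Hence $\Delta_N$ is a nonnegative real-analytic function that is not identically zero, so its zeros are isolated.

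\textbf{The hard part} is eliminating these isolated zeros, and it forces an essential use of analyticity. I would invoke Karlin's structure theorem: an analytic strictly totally positive function $f$ is a P\'olya frequency function whose bilateral Laplace transform $\widehat f$ has reciprocal a Laguerre--P\'olya entire function $E(s)=C{\rm e}^{-\gamma s^2+\beta s}\prod_k(1+\delta_k s){\rm e}^{-\delta_k s}$ with $\gamma\ge0$ and $\sum_k\delta_k^2<\infty$ (and, for analytic $f$, with either $\gamma>0$ or infinitely many $\delta_k\ne0$). By Hadamard factorization $f$ is a locally uniform limit of convolutions of a Gaussian with one-sided exponentials. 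For a Gaussian ${\rm e}^{-x^2/(2\sigma^2)}$ one has $f^{(m)}(x)={\rm e}^{-x^2/(2\sigma^2)}q_m(x)$ with $q_m$ a degree-$m$ Hermite polynomial, so $\Delta_N(f,x)=\big({\rm e}^{-x^2/(2\sigma^2)}\big)^N\det_{0\le k,\ell\le N-1}\big((-1)^\ell q_{k+\ell}(x)\big)$, and the latter Hankel-type determinant of the Hermite system is a positive constant; the one-sided exponential factors contribute further strictly positive factors by a direct computation, from which $\Delta_N(f,x)>0$ for all $N$ and $x$ follows. This is extended total positivity.

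\textbf{Extended totally positive $\Rightarrow$ strictly totally positive.} The plan is a ``de-coalescence''. The hypothesis $\Delta_N(f,x)>0$ for all $N$ and $x$ forces, again by the extended Schoenberg--Karlin theory, that $1/\widehat f$ is a Laguerre--P\'olya function with the normalization making $f$ an integrable P\'olya frequency function; by Hadamard factorization $f$ is once more a locally uniform limit of convolutions of a Gaussian with one-sided exponentials. Each Gaussian is strictly totally positive because $\det_{1\le i,j\le N}\big({\rm e}^{-(x_i-y_j)^2/2}\big)=\big(\prod_i{\rm e}^{-x_i^2/2}\big)\big(\prod_j{\rm e}^{-y_j^2/2}\big)\det_{1\le i,j\le N}\big({\rm e}^{x_iy_j}\big)>0$ by Example~\ref{exaa}; each one-sided exponential is totally positive; convolution preserves (strict) total positivity by Proposition~\ref{proo1}; and the locally uniform limit preserves total positivity, with the factorization data (forced by analyticity of $f$) keeping it strict. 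Hence $f$ is strictly totally positive. In both directions the routine ingredients are the coalescence computation and the convolution stability (Proposition~\ref{proo1}, Example~\ref{exaa}); the genuine obstacle, each time, is promoting the soft nonnegativity produced by a limiting process to strict positivity, which rests on the Hadamard-factorization structure of $f$ and of $1/\widehat f$.
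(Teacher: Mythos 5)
First, note that the paper does not prove Proposition~\ref{lee1} at all: it is quoted from Karlin (\cite[p.~55]{K}, \cite[p.~41]{K2}), so there is no in-paper argument to compare with and you are in effect reproving a classical theorem. The parts of your attempt that are routine are correct: the coalescence/divided-difference computation giving $\Delta_N(f,\cdot)\ge 0$ from strict total positivity, and the Wronskian/ODE argument (valid because $f$ is analytic) showing $\Delta_N\not\equiv 0$. The genuine gap is exactly at the point you yourself flag as ``the hard part''. You eliminate the possible isolated zeros by invoking a ``structure theorem'' asserting that an analytic strictly totally positive (resp.\ extended totally positive) function is an integrable P\'olya frequency function whose two-sided Laplace transform is the reciprocal of a Laguerre--P\'olya function. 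That assertion is false without an integrability hypothesis, which neither strict nor extended total positivity supplies: ${\rm e}^{-{\rm e}^x}$ of Example~\ref{exaa} is analytic and strictly totally positive but not in $L^1$, and, more to the point, the function $F_p\big({\rm e}^{u/2}\big)$ to which the paper applies Proposition~\ref{lee1} tends to $F_p(0)>0$ as $u\to-\infty$, so it is not integrable either; your argument would break down precisely in the case the paper needs. (This is why Example~\ref{gammafu} tilts by ${\rm e}^{\varepsilon x}$ before using Theorem~\ref{th2}; your proof never performs such a reduction, and doing so would require checking compatibility of the tilt with both notions and then an $\varepsilon\to 0$ limit, which again only yields nonnegativity.)

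Second, even granting a representation of $f$ as a limit of convolutions of a Gaussian with one-sided exponentials, your strictness claims are asserted rather than proved, in both directions: a locally uniform limit of functions with $\Delta_N>0$ (resp.\ of strictly totally positive functions) gives only $\Delta_N\ge 0$ (resp.\ total positivity) for the limit --- which is exactly the soft conclusion you set out to upgrade, so the argument is circular at its crux; ``the factorization data keeping it strict'' is not a proof. There are also technical problems with the approximants themselves: when the Gaussian factor is absent, finite convolutions of one-sided exponentials are only finitely differentiable, so their $\Delta_N$ need not be defined, and locally uniform convergence does not by itself give convergence of the high derivatives entering $\Delta_N$. The cited sources establish the equivalence by direct determinantal arguments (the basic composition formula, Sylvester-type identities, smoothing, and induction on the order of the minors) rather than through the Laplace-transform representation, precisely so that no integrability is needed. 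As it stands, your proposal should either fill in such an argument for the strict-positivity step or simply cite Karlin, as the paper does.
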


\begin{Theorem}[Schoenberg \cite{S}; see \cite{Gr, K} for a review]\label{th2}
A non-zero $L^1$-function $f(x)$ is totally positive iff its Fourier transform $G(s)$ is of the form $1/\Psi({\rm i}s)$, where $\Psi$ is a \emph{Laguerre--Polya entire function} with $\Psi(0)>0$. This means that $G(s)$ admits a Hadamard factorization
\[
G(s)=C{\rm e}^{-\beta s^2+{\rm i}\alpha s}\prod_{j\in J} \left(1+\frac{{\rm i}s}{a_j}\right)^{-1}{\rm e}^{\frac{{\rm i}s}{a_j}},
\]
where $C>0$, $\beta\ge 0$, $a_j,\alpha\in \mathbb R$, $\sum_{j\in J} a_j^{-2}<\infty$ and $J\ne \varnothing$ if $\beta=0$.
\end{Theorem}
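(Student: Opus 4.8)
The theorem asserts an equivalence whose two halves are of very different depth, and I would treat them separately.

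\emph{Sufficiency.} The plan is to realize a totally positive $f$ as a limit of convolutions of a few elementary building blocks, using the closure properties already established. Suppose $G=\widehat f$ satisfies $G(s)=1/\Psi({\rm i}s)$ with $\Psi$ Laguerre--Polya and $\Psi(0)>0$. By definition $\Psi$ is a locally uniform limit of real polynomials with only real zeros; taking partial Hadamard products, $G$ is a locally uniform limit (on $\mathbb R$) of functions
\[
G_m(s)=C_m\,{\rm e}^{-\beta_m s^2+{\rm i}\alpha_m s}\prod_{j=1}^{k_m}\Big(1+\frac{{\rm i}s}{a_{m,j}}\Big)^{-1}{\rm e}^{{\rm i}s/a_{m,j}},\qquad C_m>0,\quad \beta_m\ge 0,\quad a_{m,j},\alpha_m\in\mathbb R.
\]
Now ${\rm e}^{-\beta s^2}$ with $\beta>0$ is, up to a positive constant, the Fourier transform of a Gaussian $x\mapsto{\rm e}^{-cx^2}$, which is strictly totally positive because $\det_{i,j}{\rm e}^{-c(x_i-y_j)^2}=\big(\prod_i{\rm e}^{-cx_i^2}\big)\big(\prod_j{\rm e}^{-cy_j^2}\big)\det_{i,j}{\rm e}^{2cx_iy_j}$ and the final determinant is positive by Example~\ref{exaa}; and each factor $(1+{\rm i}s/a)^{-1}$ is, up to a positive constant, the Fourier transform of a rescaled one-sided exponential such as $x\mapsto\mathbf{1}_{\{x>0\}}{\rm e}^{-x}$, which is totally positive by a direct determinant computation (after pulling factors out of the rows and columns one is left with the determinant of a staircase $0/1$ matrix). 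Since translation --- encoded by the factors ${\rm e}^{{\rm i}\alpha_m s}$ and ${\rm e}^{{\rm i}s/a_{m,j}}$ --- visibly preserves total positivity, Proposition~\ref{proo1} shows that each $f_m$, defined by $\widehat{f_m}=G_m$, is a totally positive $L^1$-function, strictly so when $\beta_m>0$; the clause $J\ne\varnothing$ when $\beta=0$ is precisely what keeps the relevant finite convolution of one-sided exponentials integrable. Finally $f_m\to f$ uniformly: on the Fourier side $G_m\to G$ with $|G_m|\le{\rm e}^{-\beta s^2}\in L^1$ (when $\beta>0$; the case $\beta=0$ is a minor variant), so dominated convergence applies. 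Since the determinantal inequalities defining total positivity are closed, $f$ is totally positive, and it is non-zero because $\widehat f(0)=1/\Psi(0)>0$.

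\emph{Necessity.} This is the substantial direction. Let $f\ne0$ be totally positive and in $L^1$. The case $N=1$ of the definition gives $f\ge0$, so $\varphi(s):=\int_{\mathbb R}{\rm e}^{-sx}f(x)\,{\rm d}x$ is finite and positive at $s=0$. First I would observe that total positivity of order $2$, which reads $f(x)^2\ge f(x-h)f(x+h)$ for $h>0$, forces $f$ to be log-concave, so its support is an interval; and a non-zero log-concave $L^1$-function decays at least exponentially at both ends of that interval, whence $\varphi$ converges absolutely and is holomorphic on an open strip $\{\sigma_-<\operatorname{Re}s<\sigma_+\}$ with $\sigma_-<0<\sigma_+$. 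Next I would show that $\varphi$ has no zeros in this strip --- it is positive on the real slice, and total positivity of all orders, through the variation-diminishing property of $g\mapsto f*g$, rules out complex zeros --- so that $\Psi:=1/\varphi$ continues to an entire function with $\Psi(0)\ne0$. A growth estimate for $\varphi$ near the edges of the strip, fed into a Phragm\'en--Lindel\"of argument, then bounds the order of $\Psi$ by $2$, and Hadamard's factorization theorem yields
\[
\Psi(w)={\rm e}^{-\beta w^2+\gamma w}\prod_k\Big(1-\frac{w}{w_k}\Big){\rm e}^{w/w_k},\qquad \sum_k|w_k|^{-2}<\infty,
\]
with $\beta,\gamma\in\mathbb R$ since $\varphi$ is real on the real axis and with no factor $w^m$ since $\Psi(0)\ne0$.

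The remaining step --- which I expect to be the main obstacle --- is to show that $\Psi$ is genuinely Laguerre--Polya, that is, $\beta\ge0$ and every $w_k$ real. The plan is once more to exploit the variation-diminishing property: because $f$ is totally positive, convolution by $f$ cannot increase the number of sign changes of a function, a rigidity that is incompatible with a truly complex zero of $\Psi$ (pairing $f$ with a trigonometric function of a suitable frequency would manufacture extra sign changes) and with $\beta<0$. The cleanest rigorous route is to discretize: sampling $f$ on an arithmetic progression produces a Polya frequency sequence, whose generating function is known (Aissen--Schoenberg--Whitney, Edrei) to have only real zeros of one sign, and letting the mesh tend to zero transfers this to $\Psi$. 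Finally, substituting $w={\rm i}s$ and absorbing the normalization constant recovers exactly the stated Hadamard factorization of $G=\widehat f$, with $C>0$ and $\beta\ge0$.
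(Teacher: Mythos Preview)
The paper does not prove this theorem: it is stated with attribution to Schoenberg~\cite{S} (with reviews in~\cite{Gr,K}) and no proof is given. Indeed, the introduction says explicitly that only ``the `easy direction' of Schoenberg's theorem'' is used, and in the proof of Proposition~\ref{stpos} one sees that all that is actually invoked is the sufficiency half, together with Proposition~\ref{stp}. So there is no in-paper argument to compare your proposal against.

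That said, your outline is the classical one. The sufficiency direction you give is essentially correct and complete in spirit: Gaussians and one-sided exponentials are the atomic totally positive functions, convolution preserves the property (Proposition~\ref{proo1}), and total positivity is closed under pointwise limits. One small caution: your dominated-convergence step for $f_m\to f$ when $\beta=0$ needs a genuine integrable majorant on the Fourier side, which is not quite automatic; the usual fix is to throw in an auxiliary Gaussian factor ${\rm e}^{-\varepsilon s^2}$ and let $\varepsilon\to0$ at the end. For necessity, your roadmap (log-concavity from $N=2$, analyticity of the bilateral Laplace transform in a strip, zero-freeness via variation diminution, order $\le2$ and Hadamard factorization, then reality of the zeros) is exactly Schoenberg's strategy, but be aware that the final step you defer to Aissen--Schoenberg--Whitney/Edrei is not a shortcut: that result is of comparable depth to the theorem itself, so invoking it is closer to citing a parallel theorem than to giving an independent argument.
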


\begin{Example}\label{gammafu} The Fourier transform of the (strictly) totally positive function
${\rm e}^{-{\rm e}^{x}}$ is $\frac{1}{\sqrt{2\pi}}\Gamma({\rm i}s)$, which has the Hadamard factorization
\[
\Gamma({\rm i}s)=\tfrac{{\rm e}^{-\gamma {\rm i}s}}{{\rm i}s}\prod_{n=1}^\infty \left(1+\frac{{\rm i}s}{n}\right)^{-1}{\rm e}^{\frac{{\rm i}s}{n}},
\]
where $\gamma$ is the Euler constant. This product has a pole
at $s=0$ since ${\rm e}^{-{\rm e}^x}\notin L^1$. However, for any $\varepsilon>0$
the strictly totally positive function ${\rm e}^{\varepsilon x-{\rm e}^x}$ is in $L^1$ and has the Fourier transform~${\frac{1}{\sqrt{2\pi}}\Gamma({\rm i}s+\varepsilon)}$ with Hadamard factorization exactly in the form of Theorem \ref{th2}.
\end{Example}

\subsection{Oscillating integrals} \label{osint}

Fix an integer $d>2$. For an integer $0\le p\le d-1$ and $t\in \mathbb R$, let
\[
F_{p}(t):=\frac{1}{\pi d}\int_{\mathbb C}|z|^{2p}{\rm e}^{-|z|^2+2{\rm i}t\operatorname{Re}z^d}|{\rm d}z|^2.
\]
(This function, of course, depends on $d$, but we will not indicate
this dependence in the notation.)
Let us make a change of variable $w=z^d$. We then get
\[
F_p(t)=\frac{1}{\pi}\int_{\mathbb C}|w|^{\frac{2p}{d}}{\rm e}^{-|w|^{2/d}+2{\rm i}t\operatorname{Re}w}\big|w^{\frac{1-d}{d}}{\rm d}w\big|^2.
\]
Thus the function $F_p$ is the restriction to the real axis of the Fourier transform of the distribution
\[
\widehat F_p(w)={\rm e}^{-|w|^{2/d}}|w|^{\frac{2(p+1)}{d}-2}|{\rm d}w|^2=\sum_{k\ge 0}(-1)^k\frac{|w|^{\frac{2(k+p+1)}{d}-2}}{k!}|{\rm d}w|^2.
\]
Recall that the Fourier transform of
$|w|^s|{\rm d}w|^2$ is $\frac{\Gamma(\frac{s}{2}+1)}{\Gamma(-\frac{s}{2})}|t|^{-s-2}$.
Thus we have
\[
F_p\big({\rm e}^{\frac{u}{2}}\big)=\sum_{k\ge 0}(-1)^k\frac{\Gamma\big(\frac{k+p+1}{d}\big)}{k!\Gamma\big(1-\frac{k+p+1}{d}\big)}{\rm e}^{-\frac{(k+p+1)u}{d}},\qquad u\in \mathbb R.
\]
Hence $F_p\big({\rm e}^{\frac{u}{2}}\big)=O\big({\rm e}^{-(p+1)\frac{u}{d}}\big)$ as $u\to +\infty$ (the terms with $k>0$ are dominated by
the term with $k=0$ in this limit).

\begin{Proposition}\label{stpos} For $p\le d-1$, the function $F_p\big({\rm e}^{\frac{u}{2}}\big)$ is strictly totally positive.
\end{Proposition}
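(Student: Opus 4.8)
\textit{Plan.} Write $\phi(u):=F_p\big({\rm e}^{u/2}\big)$. I would compute the two-sided Laplace transform $L(\sigma):=\int_{\R}{\rm e}^{\sigma u}\phi(u)\,{\rm d}u$, recognize it via Gauss's multiplication formula for the Gamma function as a product of Laplace transforms of ${\rm e}^{-{\rm e}^{x}}$, of functions ${\rm e}^{-\beta x-{\rm e}^{-x}}$ with $\beta>0$, and of a shifted Dirac mass, and then deduce strict total positivity of $\phi$ from Propositions~\ref{proo1} and~\ref{stp}. First I would fix $\varepsilon\in\big(0,\tfrac{p+1}{d}\big)$; since $F_p$ is bounded and $F_p\big({\rm e}^{u/2}\big)=O\big({\rm e}^{-(p+1)u/d}\big)$ as $u\to+\infty$, the function ${\rm e}^{\varepsilon u}\phi(u)$ lies in $L^1(\R)$, so $L(\sigma)=2\int_0^\infty F_p(t)\,t^{2\sigma-1}\,{\rm d}t$ is holomorphic on the strip $0<\operatorname{Re}\sigma<\tfrac{p+1}{d}$. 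Using the expression for $\widehat F_p$ displayed above together with the stated Fourier transform of $|w|^s\,|{\rm d}w|^2$ (equivalently, writing $F_p$ as a Hankel transform of order $0$), a direct computation should give, on that strip,
\[
L(\sigma)=d\,\frac{\Gamma(\sigma)\,\Gamma(p+1-d\sigma)}{\Gamma(1-\sigma)},
\]
and the constant is also pinned down by matching the residue of $L$ at $\sigma=\tfrac{p+1+k}{d}$ with the $k$-th coefficient of the series for $F_p\big({\rm e}^{u/2}\big)$ displayed above.

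Next I would apply Gauss multiplication to $\Gamma(p+1-d\sigma)=\Gamma\Big(d\big(\tfrac{p+1}{d}-\sigma\big)\Big)$, writing it as a positive constant times $d^{-d\sigma}\prod_{j=0}^{d-1}\Gamma\big(\tfrac{p+1+j}{d}-\sigma\big)$. Here is where the hypothesis $p\le d-1$ enters: the integer $j^\ast:=d-1-p$ lies in $\{0,\dots,d-1\}$ and satisfies $\tfrac{p+1+j^\ast}{d}-\sigma=1-\sigma$, so the $j^\ast$-th factor cancels the denominator $\Gamma(1-\sigma)$, leaving
\[
L(\sigma)=c\,d^{-d\sigma}\,\Gamma(\sigma)\prod_{\substack{0\le j\le d-1\\ j\ne j^\ast}}\Gamma\Big(\tfrac{p+1+j}{d}-\sigma\Big),\qquad c>0 .
\]
On the line $\operatorname{Re}\sigma=\varepsilon$ each factor is itself a two-sided Laplace transform: $\Gamma(\sigma)=\int_{\R}{\rm e}^{\sigma u}{\rm e}^{-{\rm e}^{u}}\,{\rm d}u$, $\Gamma\big(\tfrac{p+1+j}{d}-\sigma\big)=\int_{\R}{\rm e}^{\sigma u}{\rm e}^{-\frac{p+1+j}{d}u-{\rm e}^{-u}}\,{\rm d}u$ (the last integral converges because $\varepsilon<\tfrac{p+1}{d}\le\tfrac{p+1+j}{d}$), and $d^{-d\sigma}=\int_{\R}{\rm e}^{\sigma u}\delta(u+d\log d)\,{\rm d}u$. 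Since the two-sided Laplace transform turns convolution into product and is injective, comparing the two formulas for $L$ on this line and inverting should yield, up to a positive constant and a translation of the variable,
\[
\phi={\rm e}^{-{\rm e}^{x}}*g,\qquad g:=f_{j_1}*\cdots*f_{j_{d-1}},\qquad f_j(x):={\rm e}^{-\frac{p+1+j}{d}x-{\rm e}^{-x}},
\]
where $\{j_1,\dots,j_{d-1}\}=\{0,\dots,d-1\}\setminus\{j^\ast\}$. (Since ${\rm e}^{-{\rm e}^{x}}\notin L^1$, I would in fact carry the weight ${\rm e}^{\varepsilon x}$ through, working with ${\rm e}^{\varepsilon x}\phi$ and correspondingly weighted building blocks, which are in $L^1$; this is harmless because multiplication by an exponential preserves strict total positivity.)

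Finally I would assemble the total-positivity statements. Each $f_j$ lies in $L^1(\R)$ because $\tfrac{p+1+j}{d}>0$, and each $f_j$ is strictly totally positive: ${\rm e}^{-{\rm e}^{-x}}$ is the reflection of the strictly totally positive function ${\rm e}^{-{\rm e}^{x}}$ (Example~\ref{exaa}), hence strictly totally positive, and multiplication by ${\rm e}^{-\frac{p+1+j}{d}x}$ preserves this. By Proposition~\ref{proo1}(i), $g$ is then a totally positive $L^1$-function, so by Proposition~\ref{stp} (equivalently, Proposition~\ref{proo1}(ii)) the function ${\rm e}^{-{\rm e}^{x}}*g$ is strictly totally positive; since strict total positivity is preserved under translation of the argument and under multiplication by a positive constant, it follows that $\phi=F_p\big({\rm e}^{x/2}\big)$ is strictly totally positive. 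I expect the main obstacle to be the rigorous evaluation of $L(\sigma)$ — justifying the interchange of integrations and the analytic continuation into the strip — together with the bookkeeping in the Gauss-multiplication step; the latter is the conceptual crux, since it is exactly the inequality $p+1\le d$ that makes one of the $d$ Gamma factors equal $\Gamma(1-\sigma)$ and cancel the denominator, and for $p\ge d$ this fails and $L$ is no longer the reciprocal of a Laguerre--Polya function, so the hypothesis is essential to this approach.
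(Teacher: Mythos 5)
Your proposal is correct, and its second half takes a genuinely different (and more elementary) route than the paper. The first half coincides in substance: the paper also evaluates the transform of $F_p\big({\rm e}^{u/2}\big)$ and arrives at a positive constant times $\frac{\Gamma({\rm i}s)}{\Gamma(1-{\rm i}s)}\Gamma(p+1-{\rm i}ds)$, which is your $L(\sigma)$ on a vertical line (the overall positive constants differ but are immaterial); the paper justifies the evaluation by an explicit rotation of the $v$-contour depending on the sign of $\operatorname{Re}\big(z^d\big)$, which is exactly the kind of argument you would need to make your ``main obstacle'' step rigorous, so that acknowledged gap is fillable by the same device. Where you diverge is in exploiting the cancellation: the paper keeps $\Gamma(p+1-{\rm i}ds)/\Gamma(1-{\rm i}s)$ as an infinite Euler/Hadamard product over $m\ge p+1$, $m\notin d\mathbb Z$ (the hypothesis $p\le d-1$ entering precisely so that the poles at $m\in d\mathbb Z$ cancel all zeros of $1/\Gamma(1-{\rm i}s)$), recognizes the result as a Laguerre--P\'olya reciprocal, and invokes Schoenberg's Theorem~\ref{th2} to produce an abstract totally positive $L^1$-function $h_p$ with $F_p\big({\rm e}^{u/2}\big)={\rm e}^{-{\rm e}^x}*h_p$, finishing with Proposition~\ref{stp}. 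You instead apply Gauss multiplication, so that $p\le d-1$ makes one of the $d$ Gamma factors literally equal $\Gamma(1-\sigma)$ and cancel, leaving a finite product of Gamma factors each of which is the two-sided Laplace transform of an explicit strictly totally positive $L^1$ density ${\rm e}^{-\beta x-{\rm e}^{-x}}$, $\beta>0$; this yields a concrete finite convolution representation of $F_p\big({\rm e}^{u/2}\big)$ and lets you conclude from Propositions~\ref{proo1} and~\ref{stp} alone, bypassing Theorem~\ref{th2} entirely (your pole sets match the paper's: the poles of $\Gamma\big(\tfrac{p+1+j}{d}-\sigma\big)$, $j\ne j^\ast$, are exactly $d\sigma=m$, $m\ge p+1$, $m\notin d\mathbb Z$). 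What the paper's route buys is brevity given the quoted Schoenberg theorem; what yours buys is explicitness and self-containedness, at the cost of the $\varepsilon$-weighting bookkeeping (which you handle correctly, since multiplication by ${\rm e}^{\varepsilon x}$ and reflection both preserve strict total positivity) and of the transform evaluation you rightly flag as the technical crux.
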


\begin{proof}
Let $G_p(s)$ be the Fourier transform of $F_p\big({\rm e}^{\frac{u}{2}}\big)$ multiplied by $\sqrt{2\pi}$. Making a change of variable $v={\rm e}^{u/2}$, we get
\begin{align*}
G_p(s)&=\frac{1}{\pi d}\int_{-\infty}^\infty\int_{\mathbb C}|z|^{2p}\exp\bigl(-|z|^2+2{\rm i}{\rm e}^{\frac{u}{2}}\operatorname{Re}z^d+{\rm i}us\bigr)|{\rm d}z|^2{\rm d}u
\\
&=\frac{2}{\pi d}\int_{0}^\infty\int_{\mathbb C}|z|^{2p}\exp\bigl(-|z|^2+2{\rm i}v\operatorname{Re}z^d\bigr)v^{2{\rm i}s-1}|{\rm d}z|^2{\rm d}v.
\end{align*}
Let us split this integral into two parts, one for $\operatorname{Re}\big(z^d\big)\ge 0$
and one for $\operatorname{Re}\big(z^d\big)\le 0$, and replace integration from
$0$ to $\infty$ by integration from $0$ to $+{\rm i}\infty$ on the first part and to $-{\rm i}\infty$ on the second part.

This contour manipulation is legitimate for the following reason. Suppose $\operatorname{Re}\big(z^d\big)> 0$. The difference between the integrals in $v$ from $0$ to $R$ and from $0$ to ${\rm i}R$ is the integral over the quarter-circle from $R$ to ${\rm i}R$, which we can write as the integral from $0$ to $\pi/2$ with respect to $\theta={\rm arg}v$. But this integral goes to zero as $R\to \infty$ since for $\operatorname{Im}v>0$, $|v|=R$, we have~${\big|\exp \big(2{\rm i}v\operatorname{Re}\big(z^d\big)\big)\big|=\exp\bigl(-2R \sin \theta\cdot \operatorname{Re}\big(z^d\big)\bigr)}$, hence the integrand goes to zero as $R\to \infty$ for $0<\theta<\frac{\pi}{2}$. The case $\operatorname{Re}\big(z^d\big)< 0$ is similar.

The above change of contours creates factors ${\rm e}^{\pm \pi s}$, so we get
\begin{align*}
G_p(s)&=\frac{4}{\pi d}\cosh(\pi s)\Gamma(2{\rm i}s)\int_0^{\pi}|2\cos\theta|^{-2{\rm i}s}{\rm d}\theta\int_0^\infty {\rm e}^{-r^2}r^{2p+1-2{\rm i} ds}{\rm d}r
\\
&=\frac{4}{\sqrt{\pi} d}\frac{\Gamma ({\rm i}s)}{\Gamma(1-{\rm i}s)}\Gamma(p+1-{\rm i} ds).
\end{align*}
Now the crucial observation is that part of the poles of $\Gamma(p+1-{\rm i}ds)$ cancel all the zeros of~$\frac{1}{\Gamma(1-{\rm i}s)}$ to bring the function to the zero-free Laguerre--Polya form.
Namely, writing the Euler product, we get
$
G_p(s)=\Gamma({\rm i}s)H_p(s)$,
where
\[
H_p(s):=\frac{4}{\sqrt{\pi} d}{\rm e}^{{\rm i}\gamma (d-1)s}\prod_{m\ge p+1,\,m\notin d\mathbb Z} \left(1-\frac{{\rm i}ds}{m}\right)^{-1}{\rm e}^{-\frac{{\rm i}ds}{m}}.
\]
By Theorem \ref{th2}, we get
\[
F_p\big({\rm e}^{\frac{u}{2}}\big)=\int_{\mathbb R}{\rm e}^{-{\rm e}^{u-y}}h_p(y){\rm d}y,
\]
where $h_p$ is a totally positive $L^1$-function with Fourier transform $H_p(s)$.
Thus Proposition \ref{stp} implies that $F_p\big({\rm e}^{\frac{u}{2}}\big)$ is strictly totally positive, as claimed.\end{proof}

\begin{Remark} The bound $p\le d-1$ in Proposition \ref{stpos} is sharp:
we have
\[
F_d(t)\sim -\frac{\Gamma\big(1+\frac{1}{d}\big)}{\Gamma\big(1-\frac{1}{d}\big)d}t^{-2(1+\frac{1}{d})}
\]
as $t\to +\infty$, so it is negative for large $t$, even though $F_d(0)>0$.
\end{Remark}

\subsection{Proof of Theorem \ref{t1}}
Propositions \ref{stpos} and \ref{lee1} imply

\begin{Corollary}\label{c3} For every $N$ and $u\in \mathbb R$, we have
$
\det_{0\le k,\ell\le N-1}\big((-1)^\ell \partial_u^{k+\ell}F_p\big({\rm e}^{\frac{u}{2}}\big)\big)> 0$.
\end{Corollary}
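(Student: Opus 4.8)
The plan is to read off the displayed determinant as the $N$-th Wronskian $\Delta_N(f,u)$ from the definition of an extended totally positive function, applied to $f(u):=F_p\big({\rm e}^{u/2}\big)$, and then to invoke Proposition \ref{lee1}. Concretely, $\Delta_N(f,u)=\det_{0\le k,\ell\le N-1}\big((-1)^\ell f^{(k+\ell)}(u)\big)=\det_{0\le k,\ell\le N-1}\big((-1)^\ell\partial_u^{k+\ell}F_p\big({\rm e}^{u/2}\big)\big)$, so the claimed positivity for all $N$ and all $u$ is precisely the statement that $f$ is extended totally positive. By Proposition \ref{stpos} (using $p\le d-1$) the function $f$ is strictly totally positive; provided we also know that $f$ is analytic, Proposition \ref{lee1} upgrades strict total positivity to extended total positivity, which is exactly the desired conclusion.

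Hence the only point to verify is that $u\mapsto F_p\big({\rm e}^{u/2}\big)$ is real-analytic on $\mathbb R$. For this I would use the series identity established above,
\[
F_p\big({\rm e}^{u/2}\big)=\sum_{k\ge 0}c_k\,{\rm e}^{-\frac{(k+p+1)u}{d}},\qquad c_k=(-1)^k\frac{\Gamma\big(\frac{k+p+1}{d}\big)}{k!\,\Gamma\big(1-\frac{k+p+1}{d}\big)},
\]
which holds for all $u\in\mathbb R$. Writing $x={\rm e}^{-u/d}$, the right-hand side is ${\rm e}^{-(p+1)u/d}$ times $\sum_{k\ge 0}c_kx^k$, so it suffices to check that $\sum_{k\ge 0}c_kx^k$ is entire, i.e., that $|c_k|^{1/k}\to 0$. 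Rewriting $c_k$ via the reflection formula as $(-1)^k\Gamma\big(\tfrac{k+p+1}{d}\big)^2\sin\big(\pi\tfrac{k+p+1}{d}\big)/(\pi k!)$ and applying Stirling's formula, one gets $\tfrac1k\log|c_k|=\big(\tfrac2d-1\big)\log k+O(1)$, which tends to $-\infty$ because $d>2$. Thus $\sum_{k\ge 0}c_kx^k$ has infinite radius of convergence, and $f$ is the product of the real-analytic function ${\rm e}^{-(p+1)u/d}$ with an entire function of the real-analytic function ${\rm e}^{-u/d}$, hence real-analytic on $\mathbb R$; the corollary follows.

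I do not expect any real obstacle here: the statement is a formal consequence of Propositions \ref{stpos} and \ref{lee1}, and the only ingredient needing a short argument is the analyticity of $F_p\big({\rm e}^{u/2}\big)$, which in turn reduces to the elementary growth estimate $|c_k|^{1/k}\to 0$ — and that is exactly where the standing hypothesis $d>2$ is used.
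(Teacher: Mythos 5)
Your proposal is correct and follows essentially the same route as the paper, whose proof of the corollary is simply that Propositions \ref{stpos} and \ref{lee1} together give the positivity of the Wronskians $\Delta_N\big(F_p\big({\rm e}^{u/2}\big),u\big)$. Your additional verification that $u\mapsto F_p\big({\rm e}^{u/2}\big)$ is real-analytic, via the convergent exponential series and the estimate $|c_k|^{1/k}\to 0$ (using $d>2$), merely makes explicit a hypothesis of Proposition \ref{lee1} that the paper leaves implicit, and it is accurate.
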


Thus we obtain

\begin{Corollary}\label{c4} For every $N$ and $t\in \mathbb C^\times$, we have
$
\det_{0\le k,\ell\le N-1}\big((-1)^\ell \partial_t^k\overline \partial_t^\ell F_p(|t|)\big)> 0$.
\end{Corollary}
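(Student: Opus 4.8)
The plan is to reduce the statement to Corollary~\ref{c3} by passing to the variable $u:=\log|t|^2$, in terms of which $F_p(|t|)=F_p\big({\rm e}^{u/2}\big)=:G(u)$ is exactly the function whose Wronskians were controlled in Corollary~\ref{c3}. Since $t\in\mathbb C^\times$ we have $u\in\mathbb R$ well-defined, and writing $t=x+{\rm i}y$ the Wirtinger derivatives of $u$ are $\partial_t u=\bar t/(t\bar t)=1/t$ and $\overline\partial_t u=1/\bar t$. The one computation to do is the elementary Leibniz identity $\partial_t\big(t^{-m}f(u)\big)=t^{-m-1}(\partial_u-m)f(u)$ for any smooth $f$; an induction on $k$ then yields
\[
\partial_t^kG(u)=t^{-k}(D)_kG(u),\qquad (D)_k:=D(D-1)\cdots(D-k+1),\quad D:=\partial_u,
\]
and, since $\overline\partial_t$ commutes with $\partial_t$ and annihilates $t^{-k}$, the same reasoning applied to $\overline\partial_t^\ell$ first gives
\[
\partial_t^k\overline\partial_t^\ell F_p(|t|)=t^{-k}\bar t^{-\ell}(D)_k(D)_\ell\,G(u).
\]

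Next I would insert this into the determinant and strip off the scalar prefactors: pulling $t^{-k}$ out of the $k$-th row and $(-1)^\ell\bar t^{-\ell}$ out of the $\ell$-th column, and using $\prod_{k=0}^{N-1}t^{-k}\prod_{\ell=0}^{N-1}\bar t^{-\ell}=|t|^{-N(N-1)}$ together with $\prod_{\ell=0}^{N-1}(-1)^\ell=(-1)^{N(N-1)/2}$, gives
\[
\det_{0\le k,\ell\le N-1}\big((-1)^\ell\partial_t^k\overline\partial_t^\ell F_p(|t|)\big)=(-1)^{N(N-1)/2}\,|t|^{-N(N-1)}\det_{0\le k,\ell\le N-1}\big((D)_k(D)_\ell\,G(u)\big).
\]
Now $(D)_k$ is a polynomial of degree $k$ in $D$ with leading coefficient $1$, so if $A=(a_{ki})_{0\le k,i\le N-1}$ is its (lower unitriangular, hence $\det A=1$) coefficient matrix, then $\big((D)_k(D)_\ell G\big)_{k,\ell}=A\,\big(\partial_u^{i+j}G(u)\big)_{i,j}\,A^{\mathrm T}$, and therefore $\det\big((D)_k(D)_\ell G\big)=(\det A)^2\det\big(\partial_u^{i+j}G(u)\big)=\det_{0\le i,j\le N-1}\big(\partial_u^{i+j}G(u)\big)$.

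Finally, the same column operation applied to Corollary~\ref{c3} gives $\det_{0\le k,\ell\le N-1}\big((-1)^\ell\partial_u^{k+\ell}G(u)\big)=(-1)^{N(N-1)/2}\det_{0\le i,j\le N-1}\big(\partial_u^{i+j}G(u)\big)$, whose left-hand side is strictly positive by Corollary~\ref{c3}; feeding this back into the previous display, the two factors $(-1)^{N(N-1)/2}$ cancel and we are left with $|t|^{-N(N-1)}$ times the strictly positive quantity of Corollary~\ref{c3}, which is strictly positive. I do not expect a real obstacle: all the analytic content sits in Corollary~\ref{c3} (hence ultimately in Proposition~\ref{stpos}), and the only thing needing care is the sign bookkeeping — verifying that the $(-1)^\ell$'s occurring in the statement of Corollary~\ref{c4} produce the same factor $(-1)^{N(N-1)/2}$ as those in Corollary~\ref{c3} so that they cancel, and that the change of basis from $\{D^k\}$ to the falling factorials $\{(D)_k\}$ contributes only the harmless square $(\det A)^2=1$.
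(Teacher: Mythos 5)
Your proof is correct and is essentially the paper's own argument: the paper also strips the positive factor $|t|^{N(N-1)}$ (by multiplying the entries by $t^k\bar t^\ell$), converts $t^k\partial_t^k$, $\bar t^\ell\bar\partial_t^\ell$ into $(t\partial_t)^k=\partial_u^k$, $(\bar t\bar\partial_t)^\ell=\partial_u^\ell$ by unitriangular row and column operations (your falling-factorial identity $t^k\partial_t^k=(D)_k$ and the change of basis $A H A^{\mathrm T}$ is exactly this step), and then invokes Corollary~\ref{c3} after the substitution $t={\rm e}^w$, $u=w+\overline w$. Your explicit sign bookkeeping with the two cancelling factors $(-1)^{N(N-1)/2}$ is a fine, slightly more detailed rendering of what the paper leaves implicit.
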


\begin{proof} It suffices to show that
$
\det_{0\le k,\ell\le N-1}\big((-1)^\ell t^k\partial_t^k\overline t^\ell\overline \partial_t^\ell F_p(|t|)\big)>0$.
But by row and column transformations, this determinant equals the determinant
\[
D_N:=\det_{0\le k,\ell\le N-1}\big((-1)^\ell (t\partial_t)^k(\overline t\overline \partial_t)^\ell F_p(|t|)\big).
\]
Now write $t={\rm e}^w$, then we get
\[
D_N=\det_{0\le k,\ell\le N-1}\big((-1)^\ell \partial_w^k\overline \partial_w^\ell F_p\big({\rm e}^{\frac{w+\overline w}{2}}\big)\big)=
\det_{0\le k,\ell\le N-1}\big((-1)^\ell \partial_u^{k+\ell} F_p\big({\rm e}^{\frac{u}{2}}\big)\big)|_{u=w+\overline w}.
\]
So $D_N> 0$ by Corollary \ref{c3}.
\end{proof}

Finally, note that
\[
\frac{1}{\pi d}\int_{\mathbb C}|z|^{2p}{\rm e}^{-|z|^2+tz^n-\overline t \overline z^n}|{\rm d}z|^2=F_p(|t|).
\]
Thus from Corollary \ref{c4}, we get the following result.

\begin{Proposition}\label{final} We have
$\det_{0\le k,\ell\le N-1}\big((-1)^\ell I_{k,\ell,p}(t)\big)>0$,
where
\[
I_{k,\ell,p}(t)=\int_{\mathbb C}z^{p+nk}\overline z^{p+n\ell}{\rm e}^{-|z|^2+tz^n-\overline t \overline z^n}|{\rm d}z|^2.
\]
\end{Proposition}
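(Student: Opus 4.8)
The plan is to produce the matrix $\big(I_{k,\ell,p}(t)\big)_{0\le k,\ell\le N-1}$ directly from the integral identity displayed just before the Proposition, by differentiating it in the variables $t$ and $\overline t$, and then to identify its determinant — up to a positive constant — with the determinant already shown to be positive in Corollary~\ref{c4}.

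First I would check that differentiation under the integral sign is legitimate in
\[
F_p(|t|)=\frac{1}{\pi d}\int_{\mathbb C}|z|^{2p}{\rm e}^{-|z|^2+tz^n-\overline t\,\overline z^n}|{\rm d}z|^2 .
\]
This is routine: for $t$ in any fixed bounded subset of $\mathbb C$, the integrand together with each of its derivatives $\partial_t^{a}\overline\partial_t^{\,b}$ is bounded in modulus by $|z|^{2p+n(a+b)}{\rm e}^{-|z|^2}$ — using $\big|{\rm e}^{tz^n-\overline t\,\overline z^n}\big|=1$ — and this majorant is integrable over $\mathbb C$, so dominated convergence applies. Since $\partial_t\big(tz^n-\overline t\,\overline z^n\big)=z^n$ and $\overline\partial_t\big(tz^n-\overline t\,\overline z^n\big)=-\overline z^n$, applying $\partial_t^{k}\overline\partial_t^{\,\ell}$ multiplies the integrand by $z^{nk}(-\overline z^n)^{\ell}=(-1)^{\ell}z^{nk}\overline z^{n\ell}$, so with $|z|^{2p}=z^{p}\overline z^{p}$ we obtain
\[
(-1)^{\ell}\,\partial_t^{k}\overline\partial_t^{\,\ell}F_p(|t|)=\frac{1}{\pi d}\int_{\mathbb C}z^{p+nk}\overline z^{p+n\ell}{\rm e}^{-|z|^2+tz^n-\overline t\,\overline z^n}|{\rm d}z|^2=\frac{1}{\pi d}\,I_{k,\ell,p}(t).
\]

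Consequently the matrix $\big(I_{k,\ell,p}(t)\big)_{0\le k,\ell\le N-1}$ equals $\pi d$ times the matrix $\big((-1)^{\ell}\partial_t^{k}\overline\partial_t^{\,\ell}F_p(|t|)\big)_{0\le k,\ell\le N-1}$ whose determinant is the subject of Corollary~\ref{c4}. Hence
\[
\det_{0\le k,\ell\le N-1}I_{k,\ell,p}(t)=(\pi d)^{N}\det_{0\le k,\ell\le N-1}\big((-1)^{\ell}\partial_t^{k}\overline\partial_t^{\,\ell}F_p(|t|)\big)>0,
\]
which is the positivity asserted by the Proposition (equivalently, every leading principal minor of the Gram matrix $\big(I_{k,\ell,p}(t)\big)$ is positive).

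I do not expect a genuine obstacle at this stage: all of the content lies in Corollary~\ref{c4}, and behind it in the strict total positivity of $F_p\big({\rm e}^{u/2}\big)$ established via Schoenberg's theorem. The only points deserving a word here are the (routine) dominated-convergence justification for differentiating under the integral, and careful bookkeeping of the sign factors $(-1)^{\ell}$ produced by the antiholomorphic derivatives, which cancel against the $(-1)^{\ell}$ appearing in the displayed determinant.
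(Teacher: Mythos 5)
Your proof is correct and is essentially the paper's own argument: the paper records the identity $\frac{1}{\pi d}\int_{\mathbb C}|z|^{2p}{\rm e}^{-|z|^2+tz^n-\overline t \overline z^n}|{\rm d}z|^2=F_p(|t|)$ and deduces the Proposition from Corollary~\ref{c4} by differentiating in $t$ and $\overline t$ under the integral sign, exactly as you do; your dominated-convergence check merely makes explicit what the paper leaves tacit.

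One bookkeeping remark. Your computation gives $(-1)^\ell\partial_t^k\overline\partial_t^{\,\ell}F_p(|t|)=\frac{1}{\pi d}I_{k,\ell,p}(t)$ and hence $\det_{0\le k,\ell\le N-1}I_{k,\ell,p}(t)>0$, whereas the Proposition as printed asserts positivity of $\det_{0\le k,\ell\le N-1}\big((-1)^\ell I_{k,\ell,p}(t)\big)$, which differs from your determinant by the factor $(-1)^{N(N-1)/2}$. The printed sign is a slip: the $(-1)^\ell$ is already produced by the $\overline t$-derivatives and should not appear a second time. Indeed, at $t=0$ (and hence for small $|t|$ by continuity) the matrix $\big(I_{k,\ell,p}\big)$ is close to diagonal with positive entries, so for $N=2$ the printed determinant would be negative. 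The statement you actually proved, $\det_{0\le k,\ell\le N-1}I_{k,\ell,p}(t)>0$ for every $N$, is the correct one, and it is exactly what the concluding biorthogonality argument requires (positivity of the $h_k$ comes from positivity of the leading principal minors of the Gram matrix $\big(I_{k,\ell,p}(t)\big)$), so your sign convention, not the printed one, should be kept.
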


Proposition \ref{final} implies that if we define \emph{biorthogonal polynomials} $p_k(z)$, $q_k(z)$ using the weight function ${\rm e}^{-|z|^2+tz^n-\overline t \overline z^n}$
as explained in the introduction \big(i.e., by the formulas ${p_k\!:=\phi_+\big(z^k\big)}$, $q_k:=\phi_-\big(z^k\big)$\big), then
\[
\int_{\mathbb C}p_k(z)\overline{q_\ell(z)}{\rm e}^{-|z|^2+tz^n-\overline t \overline z^n}|{\rm d}z|^2=\delta_{kl}h_k,
\]
where $h_k>0$. This completes the proof of Theorem \ref{t1}.

\section[Proof of Theorem 1.4]{Proof of Theorem \ref{t2}}\label{sec3}
Recall that a function $\phi\colon \mathbb R\to \mathbb C$ such that $\phi(-x)=\overline{\phi(x)}$
is \emph{positive definite} if for distinct ${x_1,\dots,x_N\!\in\!\mathbb R}$ the matrix $(\phi(x_i-x_j))$ is positive definite; equivalently, for distinct ${x_1,\dots,x_N\!\in\!\mathbb R}$, $\det \phi(x_i-x_j)>0$.
\emph{Bochner's theorem} \cite{RS} states that a continuous function $\phi$ is positive definite iff it is the Fourier transform of a finite positive measure.

\begin{Example}\label{gammafu1} By Example~\ref{gammafu}, the function
\smash{$\frac{1}{\sqrt{2\pi}}\Gamma(\pm {\rm i}s+\varepsilon)$} for $\varepsilon>0$ is positive definite, since its Fourier transform is the positive $L^1$-function ${\rm e}^{\pm\varepsilon x-{\rm e}^{\pm x}}$.
\end{Example}

Let $n\le d$ and let $\rho\ne 0$ be a non-negative locally integrable function on $\mathbb C^n$ such that $\rho(\lambda z)=|\lambda|^{2\ell}\rho(z)$, $\lambda\in \mathbb C^*$ for some integer $\ell\le d-n$.
Consider the integral
\[
I(\rho,W):=\int_{\mathbb C^n}\rho(z){\rm e}^{-|z|^2+W(z)-\overline{W(z)}}{\rm d}z{\rm d}\overline z.
\]
Theorem \ref{t2} is the special case $\ell=0$, $\rho=1$ of the following theorem.

\begin{Theorem}\label{pro1} We have $I(\rho,W)>0$.
\end{Theorem}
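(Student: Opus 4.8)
The plan is to slice $\mathbb{C}^n$ by the complex lines through the origin, which reduces the statement to the positivity of the one–variable oscillating integral $F_p$ of Section~\ref{osint} with the shifted index $p=\ell+n-1$, and then to invoke the positivity of $F_p$.

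First I would set up the line decomposition. Write $z=\lambda\omega$ with $\lambda\in\mathbb{C}^\times$, $\omega\in S^{2n-1}$, and group the sphere integral along the Hopf fibration $S^{2n-1}\to\mathbb{CP}^{n-1}$: Lebesgue measure on $\mathbb{C}^n$ then becomes $|\lambda|^{2(n-1)}\,|{\rm d}\lambda|^2\,{\rm d}\mu([\omega])$ for a suitable positive (Fubini--Study) measure $\mu$ on $\mathbb{CP}^{n-1}$, and the quantity $\rho(\lambda\omega){\rm e}^{-|\lambda\omega|^2+W(\lambda\omega)-\overline{W(\lambda\omega)}}|\lambda|^{2(n-1)}$ is $U(1)$–invariant, so the per–line integral is well defined on $\mathbb{CP}^{n-1}$. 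Using $\rho(\lambda\omega)=|\lambda|^{2\ell}\rho(\omega)$, $|\lambda\omega|^2=|\lambda|^2$, $W(\lambda\omega)=\lambda^dW(\omega)$ and the rotation $\lambda\mapsto{\rm e}^{{\rm i}\theta}\lambda$ (which removes the phase of $W(\omega)$), the integral over each line equals $\pi d\,\rho(\omega)F_{\ell+n-1}(|W(\omega)|)$, so that
\[
I(\rho,W)=\pi d\int_{\mathbb{CP}^{n-1}}\rho(\omega)\,F_{\ell+n-1}\bigl(|W(\omega)|\bigr)\,{\rm d}\mu([\omega]).
\]
The hypothesis $\ell\le d-n$ is precisely the condition $\ell+n-1\le d-1$, and $\ell+n-1\ge 0$ by local integrability of $\rho$, so $F_{\ell+n-1}$ is one of the functions covered by Section~\ref{osint}.

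Next I would establish $F_p(t)>0$ for all $t\in\mathbb{R}$ whenever $0\le p\le d-1$. This already follows from Proposition~\ref{stpos} (strict total positivity of $F_p({\rm e}^{u/2})$ gives $F_p(t)>0$ for $t>0$, and $F_p(0)>0$ while $F_p$ is even and continuous), but it can be seen directly from Bochner's theorem, which is the route suggested for Theorem~\ref{t2}. Indeed, the substitution $w=z^d$ gives $F_p(t)=\frac1\pi\int_{\mathbb{C}}|w|^{\beta}{\rm e}^{-|w|^{2/d}}{\rm e}^{2{\rm i}t\operatorname{Re}w}|{\rm d}w|^2$ with $\beta=\frac{2(p+1)}{d}-2\in(-2,0]$, i.e.\ $F_p(t)$ is a positive multiple of the Fourier transform on $\mathbb{R}^2\cong\mathbb{C}$ of $g_p(w):=|w|^{\beta}{\rm e}^{-|w|^{2/d}}$ at the point $(2t,0)$. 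Now ${\rm e}^{-|w|^{2/d}}$ is positive definite on $\mathbb{R}^2$ — it is the characteristic function of the rotation–invariant $\tfrac2d$–stable law, which is where Bochner's theorem enters — and $|w|^{\beta}$ for $\beta\in(-2,0]$ is positive definite on $\mathbb{R}^2$, its Fourier transform being a positive constant times $|\xi|^{-\beta-2}$ (or $\delta_0$ if $\beta=0$). Hence $g_p$, a product of positive definite functions, is positive definite, so $\widehat{g_p}\ge0$; and since $\widehat{g_p}$ is a convolution of everywhere–positive functions it is everywhere positive, so $F_p(t)>0$. (Alternatively, since $x\mapsto{\rm e}^{-x^{1/d}}$ is completely monotone one may write ${\rm e}^{-|w|^{2/d}}=\int_0^\infty{\rm e}^{-u|w|^2}\,{\rm d}\kappa(u)$ with $\kappa\ge0$, and $|w|^{\beta}=\frac1{\Gamma(-\beta/2)}\int_0^\infty v^{-\beta/2-1}{\rm e}^{-v|w|^2}\,{\rm d}v$ for $\beta<0$, reducing $F_p(t)$ to a positive superposition of Gaussian integrals $\int_{\mathbb{C}}{\rm e}^{-s|w|^2+2{\rm i}t\operatorname{Re}w}|{\rm d}w|^2=\tfrac\pi s{\rm e}^{-t^2/s}>0$.)

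Finally, $F_{\ell+n-1}>0$ on all of $\mathbb{R}$; as $\rho\ge0$ is not identically zero it descends to a nonzero non-negative function on $\mathbb{CP}^{n-1}$, and $\mu$ is positive, so the displayed integral is strictly positive, proving Theorem~\ref{pro1}. The step I expect to be the main obstacle is making the line decomposition rigorous: identifying the pushforward of Lebesgue measure along $\mathbb{C}^n\setminus\{0\}\to\mathbb{CP}^{n-1}$ with $|\lambda|^{2(n-1)}|{\rm d}\lambda|^2\,{\rm d}\mu$, verifying the $U(1)$–invariance that makes the per–line integral descend, and matching the constant with the normalization of $F_p$; the other point requiring care is the positive definiteness of ${\rm e}^{-|w|^{2/d}}$ (and of $|w|^\beta$) on $\mathbb{R}^2$, which is classical but is exactly where Bochner's theorem is used.
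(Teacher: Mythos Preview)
Your proof is correct and follows the same architecture as the paper's: slice $\mathbb{C}^n$ along complex lines through the origin to reduce $I(\rho,W)$ to an integral over $\mathbb{CP}^{n-1}$ of $\rho\cdot F_{n+\ell-1}(|W|)$, establish that $F_p\ge 0$ (indeed $>0$) for $0\le p\le d-1$, and conclude. The genuine difference is in how the positivity of $F_p$ is obtained. The paper invokes Bochner's theorem on $\mathbb{R}$: it uses the explicit Mellin-type computation of Section~\ref{osint} to identify the Fourier transform $G_p(s)$ of $u\mapsto F_p\bigl({\rm e}^{u/2}\bigr)$ as a product of shifted Gamma functions $\Gamma(\pm {\rm i}s+\varepsilon)$, each positive definite by Example~\ref{gammafu1}, whence $F_p\ge 0$. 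You instead apply Bochner's theorem on $\mathbb{R}^2$: after $w=z^d$ you recognise $F_p(t)$ as the two-dimensional Fourier transform of $|w|^{\beta}{\rm e}^{-|w|^{2/d}}$, and factor this as a product of classical positive definite functions (the isotropic $\tfrac{2}{d}$-stable characteristic function and the Riesz kernel), or alternatively write it as a positive Gaussian superposition via Bernstein's theorem. Your route is more self-contained, since it bypasses the contour manipulation and Gamma-function factorisation of Section~\ref{osint}; it also yields the strict inequality $F_p(t)>0$ for every $t$ directly, which slightly streamlines the final step (the paper argues only $F_p\ge 0$ and then appeals to real analyticity of $J$ to force $\rho J>0$ on a set of positive measure). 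Conversely, the paper's $1$-dimensional approach ties the result to the same Gamma-product structure that drives the total-positivity argument of Section~\ref{sec2}, making the connection between Theorems~\ref{t1} and~\ref{t2} more transparent.
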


\begin{proof}
For $z\in \mathbb C^n$ with $|z|=1$, consider the integral
\[
J(z):=\int_{\mathbb C}{\rm e}^{-|w|^2+W(z)w^d-\overline{W(z)w^d}}|w|^{2n+2\ell-2}{\rm d}w{\rm d}\overline w.
\]
We have
$
J(z)\!=\!F_{n+\ell-\!1}(|W\!(z)|)$.
Recall from Section \ref{osint} that the Fourier transform of $F_{n+\ell-\!1}\big({\rm e}^{\frac{u}{2}}\big)$
is proportional (with positive coefficient) to
\[
G(s):=\frac{\Gamma({\rm i}s)}{\Gamma(1-{\rm i}s)}\Gamma(n+\ell-{\rm i}ds).
\]
So by Example \ref{gammafu1}, $G(s)$ is positive definite for $n+\ell\le d$ (as it is then a product of Gamma functions, namely the Gamma function in the denominator cancels and the resulting function has no zeros).
Thus $F_{n+\ell-1}(t)\ge 0$ for $t>0$, i.e., $J(z)\ge 0$ (this was also shown using total positivity in Section \ref{sec2}).
Also this function is invariant under multiplication by ${\rm e}^{{\rm i}\theta}$, so descends to $\mathbb C\mathbb P^{n-1}$.
Now note that
\[
I(\rho,W)=
\int_{\mathbb C\mathbb P^{n-1}}\rho(z)J(z){\rm d}z{\rm d}\overline z.
\]
Thus $I(\rho,W)>0$ (as the function $J$ is real analytic, hence $\rho J$ is a non-negative function which is strictly positive on some subset of positive measure).
\end{proof}

Note that we in fact obtain the following generalization of Theorem \ref{t2}.

\begin{Corollary}
For any homogeneous polynomial $P\ne 0$ on $\mathbb C^n$ of degree $\ell$ with
$\ell+n\le d$,
we have
\[
I(W)(P):=\int_{\mathbb C^n}|P(z)|^2{\rm e}^{-|z|^2+W(z)-\overline{W(z)}}{\rm d}z{\rm d}\overline z > 0.
\]
\end{Corollary}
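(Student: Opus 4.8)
The plan is to obtain this statement as an immediate specialization of Theorem~\ref{pro1}, taking $\rho(z):=|P(z)|^2$. First I would check that this $\rho$ satisfies the hypotheses of Theorem~\ref{pro1}. Since $P$ is a nonzero polynomial, $\rho$ is a nonzero, non-negative, continuous (in particular locally integrable) function on $\mathbb{C}^n$. Because $P$ is homogeneous of degree $\ell$, we have $P(\lambda z)=\lambda^\ell P(z)$ for $\lambda\in\mathbb{C}^*$, hence $\rho(\lambda z)=|\lambda^\ell P(z)|^2=|\lambda|^{2\ell}\rho(z)$; here $\ell$ is a non-negative integer, and the assumption $\ell+n\le d$ is exactly the condition $\ell\le d-n$ required in Theorem~\ref{pro1} (which in turn forces $n\le d$).

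Next, I would observe that with this choice $I(\rho,W)$ coincides verbatim with $I(W)(P)$, since $\rho(z)=|P(z)|^2$ appears in the integrand and both integrals are over $\mathbb{C}^n$ against the same weight ${\rm e}^{-|z|^2+W(z)-\overline{W(z)}}$. Then Theorem~\ref{pro1} applies and yields $I(W)(P)=I(\rho,W)>0$, which is the claim.

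There is essentially no obstacle here beyond matching the degree bookkeeping: the only point to verify is that $\deg P=\ell$ with $\ell+n\le d$ translates into the hypothesis $\ell\le d-n$ of Theorem~\ref{pro1}. Alternatively, one could reprove the statement from scratch along the same route as Theorem~\ref{pro1} --- reduce the $\mathbb{C}^n$ integral to a fibration over $\mathbb{C}\mathbb{P}^{n-1}$ whose fibers produce the one-dimensional oscillating integrals $F_{n+\ell-1}$, use the positivity $F_{n+\ell-1}\ge 0$ on $(0,\infty)$ coming from Bochner's theorem (Example~\ref{gammafu1}) together with the fact that $G(s)=\frac{\Gamma({\rm i}s)}{\Gamma(1-{\rm i}s)}\Gamma(n+\ell-{\rm i}ds)$ is zero-free and positive definite when $n+\ell\le d$, and conclude by noting that $|P|^2$ times the non-negative real-analytic fiber integral is non-negative and strictly positive on a set of positive measure --- but invoking Theorem~\ref{pro1} directly makes this redundant.
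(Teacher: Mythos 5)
Your proposal is correct and follows exactly the paper's route: the paper also proves this corollary by taking $\rho(z)=|P(z)|^2$ in Theorem~\ref{pro1}, with the condition $\ell+n\le d$ matching the hypothesis $\ell\le d-n$. Your extra verification of the homogeneity and local integrability of $\rho$ is a fine (if implicit in the paper) bit of bookkeeping.
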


\begin{proof} We take $\rho(z)=|P(z)|^2$ in Proposition \ref{pro1}.
\end{proof}

In other words, Conjecture \ref{c1} holds up to degree $d-n$.

\begin{Remark} In the borderline case $\ell=d-n$ (when $d\ge n$), homogeneous polynomials of degree $\ell$ on $\mathbb C^n$ can be interpreted as holomorphic volume forms on the projective hypersurface $X\subset \mathbb C\mathbb P^{n-1}$ defined by the equation $W=0$. The integral $I(W)(P)$ is then the squared norm
\[
I(W)(P)=\|P\|^2= {\rm i}^{n-1}\int_X P\wedge \overline P,
\]
which is manifestly positive.
\end{Remark}

\subsection*{Acknowledgements}
I am very grateful to D.~Gaiotto for posing the problem and useful discussions. This work was partially supported by the NSF grant DMS-2001318.

\pdfbookmark[1]{References}{ref}
\LastPageEnding

\end{document}